\def\BibTeX{{\rm B\kern-.05em{\sc i\kern-.025em b}\kern-.08em
    T\kern-.1667em\lower.7ex\hbox{E}\kern-.125emX}}
\theoremstyle{plain}
\newtheorem{theorem}{Theorem}
\newtheorem{lemma}[theorem]{Lemma}
\newtheorem{corollary}[theorem]{Corollary}
\theoremstyle{definition}
\newtheorem{remark}[theorem]{Remark}
\newtheorem{example}[theorem]{Example}
\definecolor{darkgreen}{rgb}{0.0, 0.5, 0.0}
\definecolor{brightgreen}{rgb}{0.4, 1.0, 0.0}
\newcolumntype{?}{!{\vrule width 1pt}}
\begin{document}

\title{Constructions of Kleene lattices\\
}
\author{
\IEEEauthorblockN{Ivan~Chajda}
\IEEEauthorblockA{\textit{Department of Algebra and Geometry} \\
\textit{Faculty of Science, Palack\'y University Olomouc}\\
Olomouc, Czech Republic\\
ivan.chajda@upol.cz} \\
\IEEEauthorblockN{Jan~Paseka}
\IEEEauthorblockA{\textit{Department of Mathematics and Statistics} \\
\textit{Faculty of Science, Masaryk University}\\
Brno, Czech Republic\\
paseka@math.muni.cz}

\and
\IEEEauthorblockN{Helmut~L\"anger}
\IEEEauthorblockA{\textit{Institute of Discrete Mathematics and Geometry} \\
\textit{Faculty of Mathematics and Geoinformation, TU Wien}\\
Vienna, Austria, and \\ 
\textit{Department of Algebra and Geometry} \\
\textit{Faculty of Science, Palack\'y University Olomouc}\\
Olomouc, Czech Republic\\
helmut.laenger@tuwien.ac.at}}

\maketitle

\begin{abstract}
We present an easy construction producing a Kleene lattice $\mathbf K=(K,\sqcup,\sqcap,{}')$ from an arbitrary distributive lattice $\mathbf L$ and a non-empty subset of $L$. We show that $\mathbf L$ can be embedded into $\mathbf K$ and compute $|K|$ under certain additional assumptions. We prove that every finite chain considered as a Kleene lattice can be represented in this way and that this construction preserves direct products. Moreover, we demonstrate that certain Kleene lattices that are ordinal sums of distributive lattices are representable. Finally, we prove that not every Kleene lattice is representable.
\end{abstract}

\begin{IEEEkeywords}
Full twist-product, Kleene lattice, representation
\end{IEEEkeywords}

\section{Introduction}

De Morgan lattices, i.e.\ lattices equipped with an antitone involution are usually considered as an algebraic semantics of logics satisfying the double negation law, see e.g.\ \cite{Ci}. Among these lattices so-called {\em Kleene lattices} play a special role. The latter are distributive lattices with an antitone involution $'$ satisfying the so-called {\em normality condition}
\[
x\wedge x'\leq y\vee y'.
\]
Namely, in classical propositional logic as well as in the logic of quantum mechanics the involution ´ is a complementation and therefore 
$x \wedge x' = 0$ and $y \vee y' = 1$. Thus the normality condition is satisfied trivially. Hence, the requirement of normality condition in De Morgan logics 
is a very natural compromise of the lack of this property and it makes this logic closer to the mentioned ones. Also, every MV-algebra is a Kleene lattice. 
Due to this, the question how to construct Kleene lattices is of some interest and importance.

In what follows, we take for granted the concepts and
results on lattices and distributive lattices. For more
information on these topics we direct the reader to 
the monograph \cite{Bi} by G.~Birkhoff.

Kleene lattices were introduced by J.~A.~Kalman (\cite K), see also \cite{Ch} and \cite{CL} for recent results. Recall that an {\em antitone involution} on a poset $(P,\leq)$ is a mapping $':P\rightarrow P$ satisfying
\begin{itemize}
\item $x\leq y$ implies $y'\leq x'$,
\item $x''=x$
\end{itemize}
($x,y\in P$).

It is known that there exists an easy construction producing Kleene lattices from an arbitrary distributive lattice. The construction is as follows. Consider a distributive lattice $\mathbf L=(L,\vee,\wedge)$ and an arbitrary element $a$ of $L$. We can construct the so-called full twist-product of $\mathbf L$. By the {\em full twist-product} of $\mathbf L$ (see e.g.\ \cite{BC} and \cite{TW}) is meant the lattice $(L^2,\sqcup,\sqcap)$ where $\sqcup$ and $\sqcap$ are defined as follows:
\begin{align*}
(x,y)\sqcup(z,v) & :=(x\vee z,y\wedge v), \\
(x,y)\sqcap(z,v) & :=(x\wedge z,y\vee v)
\end{align*}
for all $(x,y),(z,v)\in L^2$. Hence $(x,y)\leq(z,v)$ if and only if both $x\leq z$ and $v\leq y$. Now let $a\in L$ and consider the set
\[
P_a(\mathbf L):=\{(x,y)\in L^2\mid x\wedge y\leq a\leq x\vee y\}.
\]
It was shown in \cite{CL} that
\begin{itemize}
\item $P_a(\mathbf L)$ is a sublattice of the full twist-product of $\mathbf L$,
\item $P_a(\mathbf L)$ is a Kleene lattice where the antitone involution $'$ is defined by $(x,y)':=(y,x)$ for all $(x,y)\in P_a(\mathbf L)$.
\end{itemize}
If $L$ is finite then $P_a(\mathbf L)$ has an odd number of elements. Hence Kleene lattices of even cardinality, e.g.\ the Kleene lattice $\mathbf K$ depicted in Figure~1, cannot be constructed in this way. 

\vspace*{3mm}

\begin{center}
	\setlength{\unitlength}{7mm}
	\begin{tabular}{c c c}
		\begin{picture}(2,4)
			\put(1,0){\circle*{.3}}
			\put(1,1){\circle*{.3}}
			\put(0,2){\circle*{.3}}
			\put(2,2){\circle*{.3}}
			\put(1,3){\circle*{.3}}
			\put(1,4){\circle*{.3}}
			\put(1,1){\line(-1,1)1}
			\put(1,1){\line(0,-1)1}
			\put(1,1){\line(1,1)1}
			\put(1,3){\line(-1,-1)1}
			\put(1,3){\line(1,-1)1}
			\put(1,3){\line(0,1)1}
			\put(.85,-.6){$0$}
			\put(1.35,.8){$a$}
			\put(-.6,1.8){$c$}
			\put(2.35,1.8){$d$}
			\put(1.35,2.8){$b$}
			\put(.85,4.3){$1$}
			\put(.2,-1.4){{\rm Fig.~1}}
		\end{picture}&\phantom{xxccccxxx}&
		\begin{picture}(2,3)
			\put(1,0){\circle*{.3}}
			\put(0,1){\circle*{.3}}
			\put(2,1){\circle*{.3}}
			\put(1,2){\circle*{.3}}
			\put(1,3){\circle*{.3}}
			\put(1,0){\line(-1,1)1}
			\put(1,0){\line(1,1)1}
			\put(1,2){\line(-1,-1)1}
			\put(1,2){\line(1,-1)1}
			\put(1,2){\line(0,1)1}
			\put(.85,-.6){$a$}
			\put(-.6,.8){$c$}
			\put(2.35,.8){$d$}
			\put(1.35,1.8){$b$}
			\put(.85,3.3){$1$}
			\put(.2,-1.4){{\rm Fig.~2}}
		\end{picture}
		
	\end{tabular}
\end{center}

\vspace*{9mm}

This motivated us to search for a more general construction. Namely, 
consider a distributive lattice $\mathbf L=(L,\vee,\wedge)$ and its subset $S$. We put 
\[
P_S(\mathbf L):=\{(x,y)\in L^2\mid x\wedge y\leq z\leq x\vee y\text{ for all }z\in S\}.
\]

It is an easy exercise to show 
that if  $\mathbf L$ is the lattice visualized in Figure~2 and  $S=\{a,b\}$ 
then $\big(P_S(\mathbf L),\sqcup,\sqcap\big)\cong\mathbf K$.

Hence a Kleene lattice $\mathbf K$ will be called {\em representable} if there exists a distributive lattice $\mathbf L=(L,\vee,\wedge)$ and a non-empty subset $S$ of $L$ with $\big(P_S(\mathbf L),\sqcup,\sqcap\big)\cong\mathbf K$.  

Recall that if $S$ is the empty set then $\mathbf P_S(\mathbf L)$ is the classical full twist-product which is not Kleene for every non-trivial distributive lattice 
$\mathbf L$.

Let $(P,\leq)$ be a poset $a,b\in P$ and $A,B\subseteq P$. We say $A\leq B$ if $x\leq y$ for all $x\in A$ and $y\in B$. Instead of $\{a\}\leq \{b\}$, $\{a\}\leq B$ and $A\leq \{b\}$ we simply write $a\leq b$, $a\leq B$ and $A\leq b$, respectively.

\begin{lemma}\label{Lemma1}
Let $\mathbf L=(L,\vee,\wedge)$ be a distributive lattice, $\mathbf L^d=(L,\vee_d,\wedge_d)$ its dual and $S$ a non-empty subset of $L$. Then $\big(P_S(\mathbf L^d),\sqcup_d,\sqcap_d,{}'\big)$ is the dual of $\big(P_S(\mathbf L),\sqcup,\sqcap,{}'\big)$, i.e.\ these Kleene lattices are isomorphic {\rm(}$'$ is an isomorphism{\rm)}.
\end{lemma}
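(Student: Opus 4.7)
The plan is to verify directly that the map $\varphi=(\,)'\colon (x,y)\mapsto (y,x)$ is the required isomorphism. I would proceed in four short steps.

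First, I would observe that the underlying sets $P_S(\mathbf L)$ and $P_S(\mathbf L^d)$ coincide. Indeed, the defining condition ``$x\wedge y\leq z\leq x\vee y$ for all $z\in S$'' dualizes to ``$x\vee y\geq_d z\geq_d x\wedge y$ for all $z\in S$'' in $\mathbf L^d$; since $\wedge_d=\vee$, $\vee_d=\wedge$, and $\leq_d$ reverses $\leq$, this is literally the same pair of inequalities. So a pair $(x,y)\in L^2$ belongs to $P_S(\mathbf L)$ iff it belongs to $P_S(\mathbf L^d)$.

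Second, I would note that $\varphi(x,y)=(y,x)$ maps $P_S(\mathbf L)$ bijectively onto itself (equivalently onto $P_S(\mathbf L^d)$): the defining condition on $(x,y)$ is symmetric in the two coordinates, so $(y,x)$ lies in $P_S(\mathbf L)$ whenever $(x,y)$ does, and $\varphi\circ\varphi=\mathrm{id}$ makes $\varphi$ its own inverse.

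Third, I would check that $\varphi$ is a lattice homomorphism from $\big(P_S(\mathbf L),\sqcup,\sqcap\big)$ to $\big(P_S(\mathbf L^d),\sqcup_d,\sqcap_d\big)$. For arbitrary $(x,y),(u,v)\in P_S(\mathbf L)$ one computes
\[
\varphi\big((x,y)\sqcup(u,v)\big)=(x\vee u,\,y\wedge v)'=(y\wedge v,\,x\vee u),
\]
and, using $\vee_d=\wedge$ and $\wedge_d=\vee$,
\[
\varphi(x,y)\sqcup_d\varphi(u,v)=(y,x)\sqcup_d(v,u)=(y\vee_d v,\,x\wedge_d u)=(y\wedge v,\,x\vee u),
\]
so the two sides agree; the corresponding identity for $\sqcap$ is verified by the same line-for-line dualization.

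Fourth, I would remark that $\varphi$ commutes with the involution, since $\varphi\big((x,y)'\big)=\varphi(y,x)=(x,y)=(\varphi(x,y))'$ trivially. There is really no obstacle here beyond careful bookkeeping of which $\vee$ and which $\wedge$ belongs to $\mathbf L$ versus $\mathbf L^d$; once that is set up, all four points follow by direct substitution into the definitions.
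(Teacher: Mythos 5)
Your proof is correct and follows essentially the same route as the paper: both rest on the observation that the defining condition for $P_S$ is self-dual (so the underlying sets coincide) and that the twist-product operations interchange under dualization. The only cosmetic difference is that the paper records the identity $\sqcup_d=\sqcap$, $\sqcap_d=\sqcup$ directly (exhibiting the duality via the identity map) and leaves the $'$-isomorphism implicit, whereas you verify explicitly that $'$ is the isomorphism; these are two equivalent ways of packaging the same computation.
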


\begin{proof}
Let $a,b\in L$ and $\leq_d$ denote the partial order relation in $\mathbf L^d$. Then
\[
\begin{array}{r c l}
P_S(\mathbf L^d)&=&
\{(x,y)\in L^2\mid x\wedge_dy\leq_dS\leq_dx\vee_dy\}\\
&=&\{(x,y)\in L^2\mid x\vee y\geq S\geq x\wedge y\}=P_S(\mathbf L).
\end{array}
\]

Moreover,
\begin{align*}
a\sqcup_db & =(a\vee_db,a\wedge_db)=(a\wedge b,a\vee b)=a\sqcap b, \\
a\sqcap_db & =(a\wedge_db,a\vee_db)=(a\vee b,a\wedge b)=a\sqcup b.
\end{align*}
\end{proof}

The question is how large the subset $S$ of $L$ should be. We can prove 
 that at most two-element subset S is satisfactory, see the following.

\begin{lemma}\label{lem2}
Let $\mathbf L=(L,\vee,\wedge)$ be a  lattice and $S$ 
a non-empty subset of $L$ such that $\bigwedge S$ and $\bigvee S$ exist. Then 
$P_S(\mathbf L)=P_{\{\bigwedge S,\bigvee S\}}(\mathbf L)$.
\end{lemma}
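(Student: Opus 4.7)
The plan is to rewrite the defining condition of $P_S(\mathbf L)$ in terms of $\bigwedge S$ and $\bigvee S$, and then verify the two inclusions essentially by inspection. The key observation is that the quantifier ``for all $z\in S$'' in the definition of $P_S(\mathbf L)$ turns the two-sided condition $x\wedge y\leq z\leq x\vee y$ into two ``extremal'' conditions: $x\wedge y$ must be a lower bound for $S$, and $x\vee y$ must be an upper bound for $S$. By the defining property of $\bigwedge S$ and $\bigvee S$, these conditions are respectively equivalent to $x\wedge y\leq\bigwedge S$ and $\bigvee S\leq x\vee y$.

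First I would record this equivalence formally as
\[
(x,y)\in P_S(\mathbf L)\ \Longleftrightarrow\ x\wedge y\leq\bigwedge S\ \text{and}\ \bigvee S\leq x\vee y.
\]
Next, I would note that since $S$ is non-empty, we have $\bigwedge S\leq\bigvee S$. This single transitivity fact will drive both inclusions.

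For $P_S(\mathbf L)\subseteq P_{\{\bigwedge S,\bigvee S\}}(\mathbf L)$, I would take $(x,y)\in P_S(\mathbf L)$, so that $x\wedge y\leq\bigwedge S$ and $\bigvee S\leq x\vee y$. Then by transitivity through $\bigwedge S\leq\bigvee S$, both elements $\bigwedge S$ and $\bigvee S$ sit between $x\wedge y$ and $x\vee y$, giving $(x,y)\in P_{\{\bigwedge S,\bigvee S\}}(\mathbf L)$. For the reverse inclusion, I would take $(x,y)\in P_{\{\bigwedge S,\bigvee S\}}(\mathbf L)$; the relevant half of the defining conditions reads $x\wedge y\leq\bigwedge S$ and $\bigvee S\leq x\vee y$. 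For an arbitrary $z\in S$ we have $\bigwedge S\leq z\leq\bigvee S$ by definition of infimum and supremum, so chaining inequalities yields $x\wedge y\leq z\leq x\vee y$, hence $(x,y)\in P_S(\mathbf L)$.

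There is no real obstacle: the argument is entirely formal once the initial reformulation is made. The only point to be slightly careful about is the nonemptiness of $S$, which is needed to conclude $\bigwedge S\leq\bigvee S$ (otherwise in a bounded lattice one would have $\bigwedge\emptyset=1$ and $\bigvee\emptyset=0$, and the equivalence would break); this hypothesis is already present in the statement.
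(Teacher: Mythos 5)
Your proposal is correct and follows essentially the same route as the paper's proof, which simply rewrites the condition $x\wedge y\leq S\leq x\vee y$ as $x\wedge y\leq\bigwedge S\leq\bigvee S\leq x\vee y$ in a chain of set equalities; you have merely spelled out the two inclusions and the role of nonemptiness explicitly.
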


\begin{proof}
We have
\begin{align*}
P_S(\mathbf L) & =\{(x,y)\in L^2\mid x\wedge y\leq S\leq x\vee y\}\\
               & =\{(x,y)\in L^2\mid x\wedge y\leq\bigwedge S\leq\bigvee S\leq x\vee y\}\\
                & =P_{\{\bigwedge S,\bigvee S\}}(\mathbf L).
\end{align*}
\end{proof}

Hence, if $S$ has both  infimum and supremum  in $L$, we can $S$ restrict to one- or two-element subsets of $L$. The question arises if our construction really produces Kleene lattices. The answer is as follows.

\begin{theorem}\label{th2}
Let $\mathbf L=(L,\vee,\wedge)$ be a distributive lattice and $S$ a non-empty subset of $L$, and put $(x,y)':=(y,x)$ for all $(x,y)\in L^2$. Then
\begin{enumerate}[{\rm(i)}]
\item $\big(P_S(\mathbf L),\sqcup,\sqcap\big)$ is a distributive sublattice of $(L^2,\sqcup,\sqcap)$,
\item $\mathbf P_S(\mathbf L):=\big(P_S(\mathbf L),\sqcup,\sqcap,{}'\big)$ is a Kleene lattice. 
\end{enumerate}
\end{theorem}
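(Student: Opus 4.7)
My plan is to split the proof into the two items, doing (i) first so that the lattice-theoretic setup is available for (ii). For (i), I would first observe that the full twist-product $(L^2,\sqcup,\sqcap)$ is (isomorphic to) the direct product of $\mathbf L$ with its order dual $\mathbf L^d$, so it is automatically distributive; hence it suffices to prove that $P_S(\mathbf L)$ is closed under $\sqcup$ and $\sqcap$. Distributivity will then be inherited.

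For closure, fix $(x,y),(z,v)\in P_S(\mathbf L)$ and $s\in S$. For $(x,y)\sqcup(z,v)=(x\vee z,y\wedge v)$ I would check the two required inequalities by standard distributive manipulations:
\[
(x\vee z)\wedge(y\wedge v)=(x\wedge y\wedge v)\vee(z\wedge v\wedge y)\leq(x\wedge y)\vee(z\wedge v)\leq s,
\]
and
\[
(x\vee z)\vee(y\wedge v)=(x\vee z\vee y)\wedge(x\vee z\vee v)\geq(x\vee y)\wedge(z\vee v)\geq s.
\]
The argument for $(x,y)\sqcap(z,v)=(x\wedge z,y\vee v)$ is dual and uses the same distributive identity (written the other way around). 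This is the only place where the distributivity of $\mathbf L$ is essential; I do not expect it to be a real obstacle, only a careful bookkeeping step.

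For (ii), I first note that the defining condition of $P_S(\mathbf L)$ is symmetric in $x$ and $y$, so $'$ maps $P_S(\mathbf L)$ to itself. The verification that $'$ is an antitone involution is immediate from the description of the order on the twist-product: $(x,y)\leq(z,v)$ iff $x\leq z$ and $v\leq y$, and involutivity is obvious since swapping coordinates twice is the identity. The De Morgan laws $\big((x,y)\sqcup(z,v)\big)'=(x,y)'\sqcap(z,v)'$ and its dual follow directly from the definitions of $\sqcup,\sqcap,{}'$.

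The one step that is not purely formal is the normality condition, and this is where the hypothesis $S\neq\emptyset$ enters. Given $(x,y),(z,v)\in P_S(\mathbf L)$, one computes $(x,y)\sqcap(x,y)'=(x\wedge y,\,x\vee y)$ and $(z,v)\sqcup(z,v)'=(z\vee v,\,z\wedge v)$; using the order on the twist-product, the required inequality $(x,y)\sqcap(x,y)'\leq(z,v)\sqcup(z,v)'$ amounts to $x\wedge y\leq z\vee v$ and $z\wedge v\leq x\vee y$. Picking any $s\in S$, the membership of both pairs in $P_S(\mathbf L)$ gives $x\wedge y\leq s\leq z\vee v$ and $z\wedge v\leq s\leq x\vee y$, which is exactly what is needed. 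This is the only place where non-emptiness of $S$ is used, and it is precisely the reason why the empty-$S$ case (the classical full twist-product) fails to be Kleene in general.
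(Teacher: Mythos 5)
Your proposal is correct and follows essentially the same route as the paper: the same two distributivity computations establish closure of $P_S(\mathbf L)$ under $\sqcup$ and $\sqcap$ (with distributivity inherited from $L^2\cong L\times L^d$), and normality is obtained by squeezing an arbitrary $s\in S$ between $(x\wedge y,x\vee y)$ and $(z\vee v,z\wedge v)$, which is exactly the paper's argument. Your explicit remark on where non-emptiness of $S$ is used is a nice touch but does not change the substance.
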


\begin{proof}
Let $(b,c),(d,e)\in P_S(\mathbf L)$ and $f,g\in L$. Then $(c,b),(e,d)\in P_S(\mathbf L)$.
\begin{enumerate}[(i)]
\item Let $a\in S$. Then in $(L^2,\sqcup,\sqcap)$ we have  $(b,c)\sqcup (d,e)=(b\vee d, c\wedge e)$. We compute
\begin{align*}
(b\vee d)\wedge(c\wedge e) & =\big(b\wedge(c\wedge e)\big)\vee\big(d\wedge(c\wedge e)\big)\\ 
&\leq(b\wedge c)\vee(d\wedge e)\leq a, \\
  (b\vee d)\vee(c\wedge e) & =\big((b\vee d)\vee c\big)\wedge\big((b\vee d)\vee e\big)\\
  &\geq(b\vee c)\vee(d\vee e)\geq a.
\end{align*}
Similarly, $(b,c)\sqcap(d,e)=(b\wedge d,c\vee e)$.

Hence $(b,c)\sqcap(d,e),(b,c)\sqcup(d,e)\in P_S(\mathbf L)$.
\item The following are equivalent:
$$
\begin{array}{l@{\quad} l @{\quad} l}
 (b,c)\leq(d,e), & b\leq d\text{ and }e\leq c, 
& e\leq c\text{ and }b\leq d, \\[0.2cm]
 (e,d)\leq(c,b), & (d,e)'\leq(b,c)'.
\end{array}
$$
Further, we have $(b,c)''=(c,b)'=(b,c)$. Thus $'$ is an antitone involution on $(P_S(\mathbf L),\sqcup,\sqcap)$. Moreover, there is an element $a\in S$ and we conclude
\begin{align*}
(b,c)\sqcap(b,c)' & =(b,c)\sqcap(c,b)=(b\wedge c,c\vee b)\\ 
 &\leq(a,a)\leq(d\vee e,e\wedge d) \\
                  & =(d,e)\sqcup(e,d)=(d,e)\sqcup(d,e)'
\end{align*}
proving that $\mathbf P_S(\mathbf L)$ is a Kleene lattice.
\end{enumerate}
\end{proof}

The question how to determine such a lattice $\mathbf L$ and 
its subset $S$ to obtain $\mathbf P_S(\mathbf L)$  isomorphic to a given Kleene lattice $\mathbf K$ will be treated in the next sections from several points of view.


\section{Direct products and embeddings} 

In this section we firstly show that the direct product of representable Kleene lattices is representable again. Then we will investigate embeddability of the given distributive lattice $\mathbf L$ into  
$\big(P_S(\mathbf L),\sqcup,\sqcap\big)$  for various subsets $S$.

\begin{theorem}\label{thr}
Let $\mathbf L_i=(L_i,\vee,\wedge)$ be a distributive lattice and $S_i$ a non-empty subset of $L_i$ for every $i\in I$. Put
\[
\mathbf L:=\prod_{i\in I}\mathbf L_i\text{ and }S:=\prod_{i\in I}S_i.
\]
Then
\[
\mathbf P_S(\mathbf L)\cong\prod_{i\in I}\mathbf P_{S_i}(\mathbf L_i).
\]
\end{theorem}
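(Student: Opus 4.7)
The plan is to exhibit the obvious candidate isomorphism and check that it restricts and preserves the Kleene structure. Write elements of $L=\prod_{i\in I}L_i$ as $x=(x_i)_{i\in I}$, and define
\[
\varphi:P_S(\mathbf L)\to\prod_{i\in I}P_{S_i}(\mathbf L_i),\qquad \varphi(x,y):=\bigl((x_i,y_i)\bigr)_{i\in I}.
\]
This is (under the canonical identification $(\prod_i L_i)^2\cong\prod_i L_i^2$) just the standard lattice isomorphism between a power of a product and the product of powers; so the work is to (a) verify that $\varphi$ really maps $P_S(\mathbf L)$ onto $\prod_i P_{S_i}(\mathbf L_i)$, and (b) check that it intertwines $\sqcup,\sqcap$ and $'$.

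For the well-definedness and surjectivity I would argue as follows. Since $\wedge,\vee$ on $\mathbf L$ are computed componentwise, the condition $x\wedge y\leq z\leq x\vee y$ for some $z=(z_i)\in L$ is equivalent to $x_i\wedge y_i\leq z_i\leq x_i\vee y_i$ for every $i\in I$. Now observe the key point about $S=\prod_i S_i$: because each $S_i$ is non-empty, the projection $\pi_i(S)=S_i$, and conversely any element of $L$ obtained by choosing one coordinate from each $S_i$ lies in $S$. Hence
\[
(x,y)\in P_S(\mathbf L)\ \Longleftrightarrow\ \forall i\in I,\ \forall z_i\in S_i:\ x_i\wedge y_i\leq z_i\leq x_i\vee y_i,
\]
which is exactly $(x_i,y_i)\in P_{S_i}(\mathbf L_i)$ for every $i$. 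This gives both that $\varphi$ lands in the stated product and that it is surjective; injectivity is immediate from the definition.

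Finally, since $\sqcup$ and $\sqcap$ in $\mathbf P_S(\mathbf L)$ are defined from $\vee,\wedge$ of $\mathbf L$, and the latter act coordinatewise on $\prod_i L_i$, we get
\[
\varphi\bigl((x,y)\sqcup(u,v)\bigr)=\bigl((x_i\vee u_i,\,y_i\wedge v_i)\bigr)_{i\in I}=\bigl(\varphi(x,y)\bigr)\sqcup\bigl(\varphi(u,v)\bigr),
\]
and analogously for $\sqcap$; while $\varphi\bigl((x,y)'\bigr)=\varphi(y,x)=((y_i,x_i))_{i\in I}=(\varphi(x,y))'$. Thus $\varphi$ is an isomorphism of Kleene lattices.

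I do not expect a real obstacle: the only substantive point is the equivalence in the displayed biconditional above, which is where the assumption $S=\prod_i S_i$ (not merely $S\subseteq\prod_i S_i$) is essential — without the product structure on $S$, the ``$\Leftarrow$'' direction would fail. Everything else is a routine componentwise check that I would only sketch rather than write out in full.
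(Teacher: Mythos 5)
Your proof is correct and is essentially the paper's proof read in the opposite direction: the paper defines the inverse map $f\bigl((x_i,y_i)_{i\in I}\bigr)=\bigl((x_i)_{i\in I},(y_i)_{i\in I}\bigr)$ and verifies exactly the same coordinatewise equivalence (using non-emptiness of each $S_i$ to choose the other coordinates) for well-definedness and surjectivity, together with preservation of $\sqcup$, $\sqcap$ and $'$. One small quibble with your closing aside: for $S\subseteq\prod_{i\in I}S_i$ it is the forward implication of your displayed biconditional that can fail (when some $\pi_i(S)\subsetneq S_i$), while the backward implication always holds; this does not affect your argument, since under the stated hypothesis $S=\prod_{i\in I}S_i$ both directions go through exactly as you say.
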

 
\begin{proof}
Let us show that the mapping $f$ from $\prod\limits_{i\in I}P_{S_i}(\mathbf L_i)$ to $P_S(\mathbf L)$ defined by
\[
f\big((x_i,y_i)_{i\in I}\big):=\big((x_i)_{i\in I},(y_i)_{i\in I}\big)
\]
for all $(x_i,y_i)_{i\in I}\in\prod\limits_{i\in I}P_{S_i}(\mathbf L_i)$ is an isomorphism from $\prod\limits_{i\in I}\mathbf P_{S_i}(\mathbf L_i)$ to $\mathbf P_S(\mathbf L)$. \\
First, we have to check that $\big((x_i)_{i\in I},(y_i)_{i\in I}\big)\in P_S(\mathbf L)$. \\
Evidently, $S\not=\emptyset$. Suppose $a=(a_i)_{i\in I}\in S$. Then $a_i\in S_i$ for every $i\in I$. Since $(x_i,y_i)\in P_{S_i}(\mathbf L_i)$ we have that $x_i\wedge y_i\leq a_i\leq x_i\vee y_i$. Hence $(x_i)_{i\in I}\wedge(y_i)_{i\in I}\leq a\leq(x_i)_{i\in I}\vee(y_i)_{i\in I}$. \\
Second, let us check that $f$ is an order embedding. Assume $(x_i,y_i)_{i\in I},(u_i,v_i)_{i\in I}\in\prod\limits_{i\in I}P_{S_i}(\mathbf L_i)$. Suppose $(x_i,y_i)_{i\in I}\leq(u_i,v_i)_{i\in I}$. Then $x_i\leq u_i$ and $v_i\leq y_i$ for every $i\in I$. Hence $(x_i)_{i\in I}\leq(u_i)_{i\in I}$ and $(v_i)_{i\in I}\leq(y_i)_{i\in I}$ in $\mathbf L$. We conclude that $\big((x_i)_{i\in I},(y_i)_{i\in I}\big)\leq\big((u_i)_{i\in I},(v_i)_{i\in I}\big)$ in $\mathbf P_S(\mathbf L)$. Conversely, if $\big((x_i)_{i\in I},(y_i)_{i\in I}\big)\leq\big((u_i)_{i\in I},(v_i)_{i\in I}\big)$ in $\mathbf P_S(\mathbf L)$ then $(x_i)_{i\in I}\leq(u_i)_{i\in I}$ and $(v_i)_{i\in I}\leq(y_i)_{i\in I}$ in $\mathbf L$. Therefore $(x_i,y_i)_{i\in I}\leq(u_i,v_i)_{i\in I}$ in $\prod\limits_{i\in I}\mathbf P_{S_i}(\mathbf L_i)$. \\
Third, we have to verify that $f$ is surjective. Let $z\in P_S(\mathbf L)$. Then $z=(x,y)\in L^2$ and $x\wedge y\leq a\leq x\vee y$ for all $a\in S$. We have $x=(x_i)_{i\in I}$ and $y=(y_i)_{i\in I}$ with $x_i,y_i\in L_i$ for every $i\in I$. Let us check that $(x_i,y_i)\in P_{S_i}(\mathbf L_i)$ for every $i\in I$. Let $a_i\in S_i$  for every $i\in I$ (this is possible since all $S_i$ are non-empty). Then $a:=(a_i)_{i\in I}\in S$ and we have
\[
(x_i)_{i\in I}\wedge(y_i)_{i\in I}\leq(a_i)_{i\in I}\leq(x_i)_{i\in I}\vee(y_i)_{i\in I}.
\]
Therefore $x_i\wedge y_i\leq a_i\leq x_i\vee y_i$ for all $a_i\in S_i$ and $i\in I$. \\
Fourth, let us show that $f$ preserves $'$. Assume that $(x_i,y_i)_{i\in I}\in\prod\limits_{i\in I} P_{S_i}(\mathbf L_i)$. Then $(y_i,x_i)_{i\in I}\in\prod\limits_{i\in I} P_{S_i}(\mathbf L_i)$ and $\big((x_i)_{i\in I},(y_i)_{i\in I}\big),\big((y_i)_{i\in I},(x_i)_{i\in I}\big)\in P_S(\mathbf L)$. We compute
\begin{align*}
f\Big(\big((x_i,y_i)_{i\in I}\big)'\Big) & =f\big((y_i,x_i)_{i\in I}\big)=\big((y_i)_{i\in I},(x_i)_{i\in I}\big)\\
& =\big((x_i)_{i\in I},(y_i)_{i\in I}\big)' =\Big(f\big((x_i,y_i)_{i\in I}\big)\Big)'.
\end{align*}
\end{proof}

\begin{corollary}\label{correp}
Direct products of representable Kleene lattices are representable Kleene lattices.
\end{corollary}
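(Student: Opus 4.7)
The plan is to observe that this corollary follows almost immediately from Theorem~\ref{thr}, so there is essentially no new content to prove beyond carefully assembling the data.

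First I would start with a family $(\mathbf K_i)_{i\in I}$ of representable Kleene lattices. Representability means, by definition, that for each $i\in I$ we can choose a distributive lattice $\mathbf L_i=(L_i,\vee,\wedge)$ and a non-empty subset $S_i\subseteq L_i$ such that $\mathbf K_i\cong\mathbf P_{S_i}(\mathbf L_i)$. Then I would set $\mathbf L:=\prod_{i\in I}\mathbf L_i$ and $S:=\prod_{i\in I}S_i$, and verify the two routine points that (a) $\mathbf L$ is still a distributive lattice, being a direct product of distributive lattices, and (b) $S$ is non-empty because each $S_i$ is non-empty, so by the axiom of choice there is at least one element $(a_i)_{i\in I}\in\prod_{i\in I}S_i$.

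Second, I would simply apply Theorem~\ref{thr} to obtain
\[
\mathbf P_S(\mathbf L)\cong\prod_{i\in I}\mathbf P_{S_i}(\mathbf L_i)\cong\prod_{i\in I}\mathbf K_i,
\]
where the last isomorphism comes from composing componentwise with the isomorphisms $\mathbf K_i\cong\mathbf P_{S_i}(\mathbf L_i)$. This exhibits $\prod_{i\in I}\mathbf K_i$ as $\mathbf P_S(\mathbf L)$ for a distributive lattice $\mathbf L$ and a non-empty subset $S\subseteq L$, which is exactly what representability demands.

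There is no real obstacle here: the main work has already been done in Theorem~\ref{thr}, and the only subtle point worth flagging is the non-emptiness of $S$, which needs the choice of one element from each $S_i$. Everything else is bookkeeping.
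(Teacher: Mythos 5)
Your proof is correct and is exactly the argument the paper intends: the corollary is stated as an immediate consequence of Theorem~\ref{thr}, obtained by choosing representing pairs $(\mathbf L_i,S_i)$ for each factor, forming $\mathbf L=\prod_{i\in I}\mathbf L_i$ and $S=\prod_{i\in I}S_i$, and composing the resulting isomorphisms. Your remark about needing a choice of one element from each $S_i$ to ensure $S\neq\emptyset$ matches the paper's own parenthetical use of that fact in the proof of Theorem~\ref{thr}.
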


\begin{lemma}\label{embedd} 
	Let $\mathbf L=(L,\vee,\wedge)$ be a distributive lattice and $a,b\in L$ with $a\leq b$ and assume that there exists an antitone complementation on $([a,b],\vee,\wedge)$ and 
	$a<(x\wedge b)\vee a < b$ implies $(x\wedge b)\vee a=x$. Then $\mathbf L$ can be embedded into $(P_{\{a, b\}}(\mathbf L),\sqcup,\sqcap)$.
\end{lemma}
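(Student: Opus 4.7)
My plan is to write down a single uniform formula for the embedding, using the antitone complementation on $[a,b]$ together with a natural retraction of $\mathbf L$ onto that interval. Denote the antitone complementation on $[a,b]$ by $^*$, and define $\pi\colon L\to[a,b]$ by $\pi(x):=(x\wedge b)\vee a$; the embedding I propose is $\phi\colon L\to P_{\{a,b\}}(\mathbf L)$ given by $\phi(x):=(x,\pi(x)^*)$.

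The first step is to observe that $\pi$ is a surjective lattice homomorphism. The identity $\pi(x\vee y)=\pi(x)\vee\pi(y)$ is immediate from $(x\vee y)\wedge b=(x\wedge b)\vee(y\wedge b)$, and $\pi(x\wedge y)=\pi(x)\wedge\pi(y)$ follows from the distributive identity $(p\vee a)\wedge(q\vee a)=(p\wedge q)\vee a$ applied to $p=x\wedge b$, $q=y\wedge b$. This is the only place where distributivity of $\mathbf L$ enters.

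The second step consists of three routine verifications for $\phi$: (i)~$\phi(x)\in P_{\{a,b\}}(\mathbf L)$: since $\pi(x)^*\leq b$ one has $x\wedge\pi(x)^*=(x\wedge b)\wedge\pi(x)^*\leq\pi(x)\wedge\pi(x)^*=a$, and since $a\leq\pi(x)^*$ one has $b=\pi(x)\vee\pi(x)^*=(x\wedge b)\vee\pi(x)^*\leq x\vee\pi(x)^*$. (ii)~$\phi$ preserves $\sqcup$ and $\sqcap$: using that $\pi$ is a lattice homomorphism together with de~Morgan for the antitone involution on the distributive lattice $[a,b]$, one gets $\phi(x)\sqcup\phi(y)=(x\vee y,\pi(x)^*\wedge\pi(y)^*)=(x\vee y,\pi(x\vee y)^*)=\phi(x\vee y)$, and dually for $\sqcap$. (iii)~Injectivity of $\phi$ is immediate, since the first coordinate of $\phi(x)$ is $x$ itself.

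The role of the extra hypothesis and the main ``obstacle'': the assumption ``$a<(x\wedge b)\vee a<b$ implies $(x\wedge b)\vee a=x$'' forces $\pi^{-1}((a,b))\subseteq[a,b]$, so that $L$ decomposes (with overlaps only at $a$ and $b$) as $\pi^{-1}(a)\cup[a,b]\cup\pi^{-1}(b)$; accordingly $\phi$ admits the equivalent piecewise description $\phi(x)=(x,b)$ on $\pi^{-1}(a)$, $\phi(x)=(x,x^*)$ on $[a,b]$, and $\phi(x)=(x,a)$ on $\pi^{-1}(b)$, which is likely the form the authors have in mind. With the uniform formula, the remaining point is simply recognising that $\pi$ is the correct retraction to pair with $^*$; the rest is mechanical.
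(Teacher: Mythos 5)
Your proof is correct, and it takes a genuinely different route from the paper's. The paper defines the same map piecewise --- $(x,b)$ on the ideal $I=\{x\mid x\wedge b\leq a\}$, $(x,a)$ on the filter $F=\{x\mid b\leq x\vee a\}$, and $(x,x')$ on $[a,b]$ --- and then verifies preservation of $\sqcup$ and $\sqcap$ by a case analysis over which of the three pieces $x$ and $y$ lie in; the extra hypothesis ``$a<(x\wedge b)\vee a<b$ implies $(x\wedge b)\vee a=x$'' is needed there precisely to make the third branch well defined, i.e.\ to force the residual case into $[a,b]$ where the complementation is available. Your uniform formula $\phi(x)=\bigl(x,((x\wedge b)\vee a)^*\bigr)$ sidesteps all of this: the whole verification reduces to the facts that $\pi(x)=(x\wedge b)\vee a$ is a lattice retraction onto $[a,b]$ (the only use of distributivity of $\mathbf L$) and that an antitone involution interchanges joins and meets, after which membership in $P_{\{a,b\}}(\mathbf L)$, the homomorphism property, and injectivity are each one line. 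Your checks of these points are all sound, and your observation that under the extra hypothesis $\phi$ specializes to the paper's piecewise map is also correct. What your approach buys, beyond eliminating the case analysis, is a genuine strengthening: nowhere do you invoke the hypothesis ``$a<(x\wedge b)\vee a<b$ implies $(x\wedge b)\vee a=x$'', so your argument shows that an antitone complementation on $([a,b],\vee,\wedge)$ alone suffices for the embedding. This is a feature, not a gap --- but it is worth stating explicitly that you are proving more than the lemma claims, rather than leaving the reader to wonder where the unused hypothesis went.
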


\begin{proof} Let $'$ be an antitone complementation on $([a,b],\vee,\wedge)$ and define $f:L\rightarrow P_{\{a, b\}}(\mathbf L)$ as follows:
\[
f(x):=\left\{
\begin{array}{ll}
(x,b)  & \text{if }x\wedge b\leq a, \\
(x,a)  & \text{if }b\leq x \vee a,\\
(x, x') & \text{otherwise,} \\
\end{array}
\right.
\]
$x\in L$. Of course, $a'=b$ and $b'=a$, and $x\in[a,b]$ implies $x'\in[a,b]$ 
and $f(x)=(x, x')$. 
If $x\wedge b\leq a$ and $b\leq x \vee a$ then $a=b$. 
Namely, $b\leq x \vee a$ implies $b= (x \vee a)\wedge b=(x\wedge b)\vee a\leq a$ . 
If $x\wedge b\leq a$ or $b\leq x \vee a$ our definition is clearly correct. 
Assume that $x\in L$ such that $x\wedge b\not\leq a$ and 
$b\not\leq x \vee a$. Then $a < (x\wedge b)\vee a= (x \vee a)\wedge b=x < b$ and again  
$f$ is defined correctly.

It is evident that $f$ preserves binary meets and joins. 
Hence  $f$  is also order-preserving.
Namely, let $I=\{x\in L \mid x\wedge b\leq a\}$ and 
$F=\{x\in L \mid b\leq x\vee a\}$. Then $I\cup F\cup [a,b]=L$, 
$I$ is an ideal in  $\mathbf L$ and $F$ is a filter in  $\mathbf L$. 
Moreover, $a\in I$ and $b\in F$.

If $x, y\in I$ or $x, y\in F$ or  $x, y\in [a,b]$ then, 
since $I$ , $F$ and $[a,b]$ are closed under finite meets and joins, we have 
$f(x)\sqcup f(y)=f(x\vee y)$ and $f(x)\sqcap f(y)=f(x\wedge y)$.

Assume now that $x\in I$ and $y\in F$. Then 
$f(x)=(x,b)$, $f(y)=(y,a)$, 
$x\wedge y\wedge b\leq a$ and 
$x\vee y\vee a\geq b$. Hence also $f(x\wedge y)=(x\wedge y, b)$ 
and $f(x\vee y)=(x\vee y, a)$. We compute:
$$
\begin{array}{c}
f(x\wedge y)=(x\wedge y, b)=(x, b)\sqcap (y,a)=f(x)\sqcap f(y),\\
f(x\vee y)=(x\vee y, a)=(x, b)\sqcup (y,a)=f(x)\sqcup f(y). 
\end{array}
$$
From now, we may assume that $a< b$ (for $a=b$ the statement is evident 
since $I=F=L$). 
Suppose now $x\in I$ and $y\in [a,b]\setminus I$. Then $x\wedge y\in I$, 
$x\vee y\not\in I$, $y\not\in F\setminus \{b\}$ and $a< x\vee y$. 
Namely, since $a\in I$ we have $a<y\leq x\vee y$. Similarly, 
since $y\wedge b\not\leq a$ we obtain that 
$(x\vee y)\wedge b\not\leq a$, i.e., $x\vee y\not\in I$. 
Assume now that $y\in F$. Then $b\leq y\vee a=y\leq b$, i.e., 
$y\not\in F\setminus \{b\}$. 
 We now compute:
$$
\begin{array}{c}
	f(x\wedge y)=(x\wedge y, b)=(x, b)\sqcap (y,y')=f(x)\sqcap f(y).
\end{array}
$$
Assume first $x\vee y\not\in F$. Then $a< y$, $y\not\in (I\cup F)\setminus \{b\}$. 
Hence $a<(x\vee y\vee a)\wedge b=\left((x\vee y)\wedge b\right)\vee a<b$ and by assumption of the Lemma 
we have
\vskip-0.8cm
$$
\begin{array}{c}
x\vee y= \big((x\vee y)\wedge b\big)\vee a=%
(\overbracket{x\wedge b}^{\leq a})\vee (\overbracket{y\wedge b}^{=y})\vee a =y, \text{ and} 
\end{array}
$$
since $x\in I$ and $y\not\in I\cup F$ we have that $(x,b)=f(x)$ and  $(y,y')=f(y)=f(x\vee y)$. We conclude 
$$
\begin{array}{c}
f(x\vee y)=(y, y')=(x, b)\sqcup (y,y')=f(x)\sqcup f(y). 
\end{array}
$$
Now, let $x\vee y\in F$. Then in fact $x\vee y=(x\vee y)\vee a\geq b$. Since $a\leq y$ and $a\leq b$ 
we have 
$b=(x\wedge b)\vee (y\wedge b)\leq a \vee (y\wedge b)=y\wedge b\leq y\leq b$. Therefore  $y=b$ and 
$$
\begin{array}{c}
	f(x\vee y)=(x\vee b, a)=(x, b)\sqcup (b,a)=f(x)\sqcup f(y). 
\end{array}
$$
The case $x\in F$ and $y\in [a,b]\setminus F$ can be verified similarly.

Now, let $x, y\in L$ such that 
$f(x)\sqsubseteq f(y)$. Since the first coordinate of $f(x)$ is $x$ we have that $x\leq y$. Hence $f$ reflects order. 
\end{proof}

Note that if $'$ is an antitone complementation on $([a,b],\vee,\wedge)$ then 
$([a,b],\vee,\wedge,{}')$ is a Boolean algebra, and conversely. 

In the following we write $P_a(\mathbf L)$ and $P_{ab}(\mathbf L)$ instead of $P_{\{a\}}(\mathbf L)$ and $P_{\{a,b\}}(\mathbf L)$, respectively.

\begin{corollary}
Let $\mathbf L=(L,\vee,\wedge)$ be a distributive lattice and $a\in L$. Then $\mathbf L$ can be embedded into $(P_a(\mathbf L),\sqcup,\sqcap)$ via the  prescription $x\mapsto (x,a)$.
\end{corollary}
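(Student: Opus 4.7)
The plan is to obtain this corollary as the degenerate case $b=a$ of Lemma~\ref{embedd}. With $a=b$, the interval $[a,b]=\{a\}$ is a one-element Boolean algebra, so the required antitone complementation trivially exists (it is the identity map). Moreover, the implication ``$a<(x\wedge b)\vee a<b$ implies $(x\wedge b)\vee a=x$'' holds vacuously, since no element strictly lies between $a$ and itself. Because $P_{\{a,b\}}(\mathbf L)=P_{\{a\}}(\mathbf L)=P_a(\mathbf L)$, and because the first branch of the definition of $f$ in the lemma applies to every $x\in L$ (the condition $x\wedge a\leq a$ is automatic), one obtains $f(x)=(x,b)=(x,a)$, exactly the prescription in the statement.

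Alternatively, I would give a direct three-step verification. First, $(x,a)\in P_a(\mathbf L)$ for every $x\in L$, because $x\wedge a\leq a\leq x\vee a$. Second, the map $x\mapsto(x,a)$ preserves $\sqcup$ and $\sqcap$: the computations
\[
(x,a)\sqcup(y,a)=(x\vee y,a\wedge a)=(x\vee y,a)
\]
and analogously $(x,a)\sqcap(y,a)=(x\wedge y,a)$ are immediate from the definitions of the twist-product operations. Third, injectivity is evident by projecting to the first coordinate, which also shows that the map is an order embedding.

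There is essentially no obstacle in either approach; the only thing worth observing in the first is that all the technical hypotheses of Lemma~\ref{embedd} collapse to triviality when $a=b$. The content of the corollary is merely that the simplest case of the construction, namely $P_a(\mathbf L)$ with a singleton $S$, already contains an isomorphic copy of the original distributive lattice $\mathbf L$ located along the horizontal ``level'' $y=a$.
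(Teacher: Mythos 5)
Your proposal is correct and matches the paper's (implicit) argument: the corollary is stated as a consequence of Lemma~\ref{embedd}, and specializing to $b=a$ exactly as you describe --- where $[a,a]$ is the trivial Boolean algebra, the technical hypothesis is vacuous, and the first branch of $f$ always applies --- is the intended derivation. Your supplementary direct verification is also correct and arguably the cleaner way to see it.
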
 

\begin{corollary}
	Let $\mathbf L=(L,\vee,\wedge)$ be a distributive lattice and $a,b\in L$ with $a<b$ such that $b$ covers $a$. Then $\mathbf L$ can be embedded into $(P_{ab}(\mathbf L),\sqcup,\sqcap)$ via the prescription 
	$$
	x\mapsto \left\{
	\begin{array}{ll}
		(x,b)  & \text{if }x\leq a, \\
		(x,a)  & \text{if }b\leq x.
	\end{array}
	\right.
	$$
\end{corollary}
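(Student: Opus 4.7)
The plan is to invoke Lemma~\ref{embedd} directly with the covering pair $a,b$, so I only need to verify its two hypotheses and then check that the resulting map reduces to the formula stated in the corollary.

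For the hypotheses, the key observation is that $b$ covering $a$ forces the interval $[a,b]$ to consist of exactly the two elements $a$ and $b$. The swap $a\leftrightarrow b$ is then an antitone involution on this two-element chain and is trivially a complementation, establishing the first hypothesis. For the second hypothesis, note that $(x\wedge b)\vee a$ always lies in $[a,b]=\{a,b\}$, so it can never take a value strictly between $a$ and $b$. Hence the antecedent of the implication $a<(x\wedge b)\vee a<b\Rightarrow(x\wedge b)\vee a=x$ is unsatisfiable, and the implication holds vacuously.

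With both hypotheses verified, Lemma~\ref{embedd} supplies an embedding $f\colon\mathbf L\hookrightarrow(P_{ab}(\mathbf L),\sqcup,\sqcap)$. I would then unpack $f$ on the two cases named in the corollary: if $x\leq a$, then $x\wedge b=x\leq a$, so the first branch of the lemma's definition activates and produces $(x,b)$; if $b\leq x$, then $b\leq x\vee a$, so the second branch activates and produces $(x,a)$. These match the prescribed formula. I do not anticipate any real obstacle here, since the substantive work has already been done inside Lemma~\ref{embedd}; the only content of the corollary is the recognition that a covering pair automatically satisfies both of its hypotheses, and that the two branches of $f$ then specialize to the clean formulas shown.
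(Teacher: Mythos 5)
Your proposal is correct and matches the paper's (implicit) argument: the corollary is stated without proof precisely because it is the specialization of Lemma~\ref{embedd} to a covering pair, where $[a,b]=\{a,b\}$ makes the antitone-complementation hypothesis trivial and renders the condition $a<(x\wedge b)\vee a<b$ vacuously unsatisfiable, exactly as you observe. One small caution: the two cases $x\leq a$ and $b\leq x$ need not exhaust $L$ (an $x$ incomparable to both is not covered); the lemma's map actually uses the weaker conditions $x\wedge b\leq a$ and $b\leq x\vee a$, which \emph{do} exhaust $L$ when $b$ covers $a$ because $(x\wedge b)\vee a\in\{a,b\}$, so the corollary's prescription should be read with those conditions to obtain a total map.
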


\begin{corollary}
Let $\mathbf L=(L,\vee,\wedge)$ be a distributive lattice and $a,b\in L$ with $a<b$ and $L=(a]\cup[a,b]\cup[b)$ and assume that there exists an antitone complementation on $([a,b],\vee,\wedge)$. Then $\mathbf L$ can be embedded into $(P_{ab}(\mathbf L),\sqcup,\sqcap)$ via the prescription 
$$
x\mapsto \left\{
\begin{array}{ll}
(x,b)  & \text{if }x\leq a, \\
(x,x') & \text{if }a\leq x\leq b, \\
(x,a)  & \text{if }b\leq x.
\end{array}
\right.
$$
\end{corollary}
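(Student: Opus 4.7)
The plan is to deduce this corollary directly from Lemma \ref{embedd}. Since an antitone complementation on $([a,b],\vee,\wedge)$ is assumed, it suffices to verify the second hypothesis of the lemma, namely the implication
\[
a<(x\wedge b)\vee a<b \;\Longrightarrow\; (x\wedge b)\vee a=x,
\]
and then check that the piecewise formula in the corollary coincides with the map $f$ constructed in the proof of Lemma \ref{embedd}.

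First I would verify the implication by splitting on the trichotomy $L=(a]\cup[a,b]\cup[b)$. If $x\leq a$ then $x\wedge b\leq a$, so $(x\wedge b)\vee a=a$, contradicting the left strict inequality. If $b\leq x$ then $x\wedge b=b$, so $(x\wedge b)\vee a=b$, contradicting the right strict inequality. Hence only $a\leq x\leq b$ remains, and in that case $x\wedge b=x$ and $x\vee a=x$, so $(x\wedge b)\vee a=x$. Thus the hypothesis of Lemma \ref{embedd} holds.

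Second I would match the two definitions. For $x\leq a$ we have $x\wedge b\leq a$, so the lemma's $f$ returns $(x,b)$, in agreement with the first branch. For $b\leq x$ we have $b\leq x\vee a$, so the lemma returns $(x,a)$, in agreement with the third branch. For $a\leq x\leq b$ with $a<x<b$, neither of the two conditions in the lemma's definition holds (using that $a<b$), so $f(x)=(x,x')$, as in the second branch. At the endpoints one checks directly that the branches agree: for $x=a$ the first branch gives $(a,b)$, and since $'$ is a complementation on $[a,b]$ we have $a'=b$, so $(a,a')=(a,b)$; analogously $(b,b')=(b,a)$ for $x=b$. Hence the prescription of the corollary is precisely the restriction of $f$, and Lemma \ref{embedd} then yields that it is a lattice embedding of $\mathbf L$ into $(P_{ab}(\mathbf L),\sqcup,\sqcap)$.

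The only possible obstacle is the boundary bookkeeping at $x=a$ and $x=b$, but this is resolved at once by the identities $a'=b$ and $b'=a$, so no further work beyond invoking Lemma \ref{embedd} is required.
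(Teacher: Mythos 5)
Your proposal is correct and follows exactly the route the paper intends: the corollary is stated as an immediate consequence of Lemma~\ref{embedd}, and you correctly verify the hypothesis $a<(x\wedge b)\vee a<b\Rightarrow(x\wedge b)\vee a=x$ from the decomposition $L=(a]\cup[a,b]\cup[b)$ and check that the piecewise prescription agrees with the map $f$ from the lemma's proof, including the boundary cases via $a'=b$ and $b'=a$. No gaps.
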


\begin{remark}
If $\mathbf L$ can be embedded into $\big(P_S(\mathbf L),\sqcup,\sqcap\big)$ and 
has a chain or antichain of a certain cardinality then $\big(P_S(\mathbf L),\sqsubseteq\big)$ 
has a chain or antichain of the same cardinality, respectively. For instance, 
if $\big(P_S(\mathbf L),\sqsubseteq\big)$ has no three-element antichain then the same holds for $\mathbf L$.
\end{remark}

\begin{example}\label{ex1}
If $\mathbf L$ is the lattice shown in Figure~1 then $\big(P_a(\mathbf L),\sqcup,\sqcap\big)$ is depicted in Figure~3. The non-filled circles 
indicate the embedding of $\mathbf L$ into 
$\big(P_a(\mathbf L),\sqcup,\sqcap\big)$.
\end{example}


\begin{center}
\setlength{\unitlength}{5.0mm}
\begin{picture}(10,18)
\put(3,1){\circle*{.3}}
\put(3,3){\circle*{.3}}
\put(5,3){\circle*{.3}}
\put(3,5){\circle*{.3}}
\put(5,5){\circle*{.3}}
\put(7,5){\circle*{.3}}
\put(3,7){\circle*{.3}}
\put(5,7){\circle{.3}}
\put(7,7){\circle*{.3}}
\put(1,9){\circle*{.3}}
\put(5,9){\circle{.3}}
\put(9,9){\circle*{.3}}
\put(3,11){\circle{.3}}
\put(5,11){\circle*{.3}}
\put(7,11){\circle{.3}}
\put(3,13){\circle*{.3}}
\put(5,13){\circle{.3}}
\put(7,13){\circle*{.3}}
\put(5,15){\circle*{.3}}
\put(7,15){\circle{.3}}
\put(7,17){\circle*{.3}}
\put(3,1){\line(0,1)2}
\put(3,1){\line(1,1)4}
\put(3,3){\line(1,1)6}
\put(5,3){\line(-1,1)2}
\put(5,3){\line(0,1)2}
\put(3,5){\line(0,1)2}
\put(3,5){\line(1,1)2}
\put(5,5){\line(-1,1)4}
\put(7,5){\line(-1,1)2}
\put(7,5){\line(0,1)2}
\put(3,7){\line(1,1)4}
\put(5,7){\line(0,1)4}
\put(7,7){\line(-1,1)4}
\put(1,9){\line(1,1)6}
\put(9,9){\line(-1,1)4}
\put(3,11){\line(0,1)2}
\put(5,11){\line(-1,1)2}
\put(5,11){\line(1,1)2}
\put(7,11){\line(0,1)2}
\put(3,13){\line(1,1)4}
\put(5,13){\line(0,1)2}
\put(7,13){\line(-1,1)2}
\put(7,15){\line(0,1)2}
\put(2.35,.25){$(0,1)$}
\put(1.2,2.85){$(a,1)$}
\put(5.4,2.85){$(0,b)$}
\put(1.2,4.85){$(0,c)$}
\put(5.25,4.85){$(a,b)$}
\put(7.4,4.85){$(0,d)$}
\put(1.2,6.85){$(a,c)$}
\put(5.25,6.85){$(0,a)$}
\put(7.4,6.85){$(a,d)$}
\put(-.8,8.85){$(d,c)$}
\put(5.4,8.85){$(a,a)$}
\put(9.4,8.85){$(c,d)$}
\put(1.2,10.85){$(d,a)$}
\put(3.225,10.85){$(a,0)$}
\put(7.4,10.85){$(c,a)$}
\put(1.2,12.85){$(d,0)$}
\put(3.25,12.85){$(b,a)$}
\put(7.4,12.85){$(c,0)$}
\put(3.2,14.85){$(b,0)$}
\put(7.4,14.85){$(1,a)$}
\put(6.35,17.4){$(1,0)$}
\put(4.2,-.75){{\rm Fig.~3}}
\end{picture}
\end{center}

\vspace*{5mm}

\begin{example}
	If $\mathbf L$ is the lattice shown in Figure~1 then $\big(P_{ab}(\mathbf L),\sqcup,\sqcap\big)$ is visualized in Figure~4.	
	The non-filled circles indicate the embedding of $\mathbf L$ into $\big(P_{ab}(\mathbf L),\sqcup,\sqcap\big)$.
\end{example}

\vspace*{-2mm}

\begin{center}
	\setlength{\unitlength}{5.0mm}
	\begin{picture}(6,14)
		\put(3,1){\circle*{.3}}
		\put(1,3){\circle*{.3}}
		\put(5,3){\circle{.3}}
		\put(3,5){\circle{.3}}
		\put(1,7){\circle{.3}}
		\put(5,7){\circle{.3}}
		\put(3,9){\circle{.3}}
		\put(1,11){\circle*{.3}}
		\put(5,11){\circle{.3}}
		\put(3,13){\circle*{.3}}
		\put(3,1){\line(-1,1)2}
		\put(3,1){\line(1,1)2}
		\put(1,3){\line(1,1)4}
		\put(5,3){\line(-1,1)4}
		\put(1,7){\line(1,1)4}
		\put(5,7){\line(-1,1)4}
		\put(1,11){\line(1,1)2}
		\put(5,11){\line(-1,1)2}
		\put(2.35,.25){$(0,1)$}
		\put(-.8,2.85){$(a,1)$}
		\put(5.4,2.85){$(0,b)$}
		\put(3.4,4.85){$(a,b)$}
		\put(-.8,6.85){$(c,d)$}
		\put(-.8,10.85){$(b,0)$}
		\put(3.4,8.85){$(b,a)$}
		\put(5.4,6.85){$(d,c)$}
		\put(5.4,10.85){$(1,a)$}
		\put(2.35,13.4){$(1,0)$}
		\put(2.2,-.75){{\rm Fig.~4}}
	\end{picture}
\end{center}

\vspace*{4mm}

\section{Cardinality of $P_S(\mathbf L)$}

If a distributive lattice $\mathbf L$  is finite then 
also $P_S(\mathbf L)$ is finite and hence it makes sense to ask about its cardinality depending on the cardinality of $\mathbf L$. Using 
results reached here we will be able to state and to prove that not every Kleene lattice is representable.

\begin{lemma}
	Let $\mathbf L=(L,\vee,\wedge)$ be a finite lattice and $a\in L$ with $L=[0,a]\cup[a,1]$. Then
	$$
	\begin{array}{@{}l@{}}
		|P_a(\mathbf L)| =|\{(x,y)\in L^2\mid x,y<a;x\vee y=a\}|-1\\
		\multicolumn{1}{r}{\phantom{x}+|\{(x,y)\in L^2\mid x,y>a;x\wedge y=a\}|+ 2|[0,a]|\cdot|[a,1]|.}
	\end{array}
	$$
\end{lemma}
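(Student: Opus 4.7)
The plan is to partition $L^2$ according to how each coordinate sits relative to $a$ and check membership in $P_a(\mathbf L)$ case by case. Since $L=[0,a]\cup[a,1]$, every element of $L$ is comparable with $a$, so I can write $L$ as the disjoint union $A\cup\{a\}\cup C$, where $A:=\{x\in L\mid x<a\}$ and $C:=\{x\in L\mid x>a\}$. Then $|A|=|[0,a]|-1$ and $|C|=|[a,1]|-1$.

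The nine cases for $(x,y)\in L^2$ split into three kinds. First, for the six mixed cases where $x\in\{a\}$ or $y\in\{a\}$, or where one coordinate lies in $A$ and the other in $C$, the conditions $x\wedge y\leq a$ and $a\leq x\vee y$ hold automatically; these pairs all lie in $P_a(\mathbf L)$. A direct count gives $1+2|A|+2|C|+2|A|\cdot|C|$ such pairs. Second, for $(x,y)\in A\times A$, one has $x\wedge y<a$ automatically, and $a\leq x\vee y$ combined with $x\vee y\leq a$ forces $x\vee y=a$. Dually, for $(x,y)\in C\times C$, membership in $P_a(\mathbf L)$ is equivalent to $x\wedge y=a$.

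Summing these contributions,
\begin{align*}
|P_a(\mathbf L)|&=|\{(x,y)\mid x,y<a,\ x\vee y=a\}|\\
&\quad+|\{(x,y)\mid x,y>a,\ x\wedge y=a\}|\\
&\quad+1+2|A|+2|C|+2|A|\cdot|C|.
\end{align*}
The final step is to rewrite $1+2|A|+2|C|+2|A|\cdot|C|$ using $|A|=|[0,a]|-1$ and $|C|=|[a,1]|-1$; expanding $2(|[0,a]|-1)(|[a,1]|-1)$ and collecting terms collapses this expression to $2|[0,a]|\cdot|[a,1]|-1$, which yields exactly the claimed formula.

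There is no real obstacle: the argument is purely combinatorial bookkeeping, and the only point requiring a little care is verifying that the six ``mixed'' cases indeed contribute $2|[0,a]|\cdot|[a,1]|-1$ after accounting for the overlap at the element $a$ itself, which is responsible for the $-1$ appearing in the statement.
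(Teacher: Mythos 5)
Your proof is correct and follows essentially the same route as the paper: both partition $L^2$ by the position of each coordinate relative to $a$ (using $L=[0,a]\cup[a,1]$ to guarantee comparability), observe that all mixed cases lie in $P_a(\mathbf L)$ and together contribute $2|[0,a]|\cdot|[a,1]|-1$, and identify the $x,y<a$ and $x,y>a$ cases with the two displayed sets. The paper merely lists the seven mixed sub-cases individually where you group them, so the arguments coincide.
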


\begin{proof}
Put $b:=|[0,a]|$ and $c:=|[a,1]|$. Then
$$
\begin{array}{l r}
|\{(x,y)\in P_a(\mathbf L)\mid x, y<a\}| &\phantom{xxxxxxxxxxxxxxx} \\
\multicolumn{2}{r}{=|\{(x,y)\in L^2\mid x,y<a;x\vee y=a\}|,} \phantom{\text{ and}}\\
|\{(x,y)\in P_a(\mathbf L)\mid x>a;y>a\}| & \\
\multicolumn{2}{r}{=|\{(x,y)\in L^2\mid x,y>a;x\wedge y=a\}|, \text{ and}}
\end{array}
$$
\begin{align*}
|\{(x,y)\in P_a(\mathbf L)\mid x<a;y=a\}| & =b-1, \\
|\{(x,y)\in P_a(\mathbf L)\mid x<a;y>a\}| & =(b-1)(c-1), \\
|\{(x,y)\in P_a(\mathbf L)\mid x=a;y<a\}| & =b-1, \\
|\{(x,y)\in P_a(\mathbf L)\mid x=a;y=a\}| & =1, \\
|\{(x,y)\in P_a(\mathbf L)\mid x=a;y>a\}| & =c-1, \\
|\{(x,y)\in P_a(\mathbf L)\mid x>a;y<a\}| & =(b-1)(c-1), \\
|\{(x,y)\in P_a(\mathbf L)\mid x>a;y=a\}| & =c-1.\\
\end{align*}

\vspace*{-4mm}

Adding up these numbers yields
$$
\begin{array}{@{}l@{}}
|P_a(\mathbf L)| =|\{(x,y)\in L^2\mid x,y<a;x\vee y=a\}|-1\\
\multicolumn{1}{r}{\phantom{x}+|\{(x,y)\in L^2\mid x,y>a;x\wedge y=a\}|+ 2bc.}
\end{array}
$$
\end{proof}

\begin{lemma}\label{lem4}
Let $\mathbf L=(L,\vee,\wedge)$ be a finite lattice and $a\in L$. Then
\begin{enumerate}[{\rm(i)}]
\item $|P_a(\mathbf L)|$ is odd,
\item $|P_a(\mathbf L)|\geq2|L|-1$,
\item $|L|\leq(|P_a(\mathbf L)|+1)/2$,
\item $|P_a(\mathbf L)|\geq2|[0,a]|\cdot|[a,1]|-1$.
\end{enumerate}
\end{lemma}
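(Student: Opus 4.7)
The plan is to handle (i) by counting fixed points of the antitone involution $'$, and to handle (ii)--(iv) by exhibiting explicit subsets of $P_a(\mathbf L)$ of the appropriate size. Note that (iii) is just (ii) solved for $|L|$, so there are really only three independent claims; distributivity of $\mathbf L$ will play no role in any of them.

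For (i), I would observe that the map $(x,y)\mapsto(y,x)$ is an involution on $P_a(\mathbf L)$ whose orbits have size $1$ or $2$. A fixed point $(x,y)=(y,x)$ forces $x=y$, and the membership condition $x\wedge x\leq a\leq x\vee x$ then forces $x=a$. So $(a,a)$ is the unique fixed point, the rest of $P_a(\mathbf L)$ partitions into $2$-orbits, and $|P_a(\mathbf L)|$ is odd.

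For (ii), I would identify the two ``axis'' subsets $A:=\{(x,a)\mid x\in L\}$ and $A':=\{(a,x)\mid x\in L\}$. Both sit inside $P_a(\mathbf L)$, since $x\wedge a\leq a\leq x\vee a$ holds in any lattice, and each has exactly $|L|$ elements. Since $A\cap A'=\{(a,a)\}$, their union contributes $2|L|-1$ distinct elements, giving $|P_a(\mathbf L)|\geq 2|L|-1$. Claim (iii) is the same inequality rewritten as $|L|\leq(|P_a(\mathbf L)|+1)/2$.

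For (iv), I would enlarge the axes to the two ``rectangles'' $R:=[0,a]\times[a,1]$ and $R':=[a,1]\times[0,a]$. For $(x,y)\in R$, $x\leq a\leq y$ yields $x\wedge y=x\leq a$ and $x\vee y=y\geq a$, so $R\subseteq P_a(\mathbf L)$; the argument for $R'$ is symmetric (or follows by applying $'$). Both sets have cardinality $|[0,a]|\cdot|[a,1]|$, and $R\cap R'$ consists of pairs $(x,y)$ with $x,y\in[0,a]\cap[a,1]=\{a\}$, that is, only $(a,a)$. Summing yields $|P_a(\mathbf L)|\geq 2|[0,a]|\cdot|[a,1]|-1$. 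I do not anticipate a serious obstacle anywhere: each inequality reduces to counting two explicit subsets that meet in the single element $(a,a)$, and parity falls out of the fixed-point analysis of $'$.
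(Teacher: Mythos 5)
Your proposal is correct and follows essentially the same route as the paper: part (i) via the fixed-point analysis of the involution $(x,y)\mapsto(y,x)$ with $(a,a)$ as the unique fixed point, part (ii) via the inclusion $P_a(\mathbf L)\supseteq(\{a\}\times L)\cup(L\times\{a\})$, part (iii) as a restatement of (ii), and part (iv) via $P_a(\mathbf L)\supseteq([0,a]\times[a,1])\cup([a,1]\times[0,a])$. Your write-up merely spells out the cardinality and intersection computations that the paper leaves implicit.
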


\begin{proof}
\
\begin{enumerate}[(i)]
\item For every $x,y\in L$ we have
\begin{align*}
(x,y)\in P_a(\mathbf L) & \Leftrightarrow(y,x)\in P_a(\mathbf L), \\
(x,x)\in P_a(\mathbf L) & \Leftrightarrow x=a.
\end{align*}
\item This follows from $P_a(\mathbf L)\supseteq(\{a\}\times L)\cup(L\times\{a\})$.
\item This follows from (ii).
\item This follows from $P_a(\mathbf L)\supseteq([0,a]\times[a,1])\cup([a,1]\times[0,a])$.
\end{enumerate}
\end{proof}

\begin{lemma}
Let $\mathbf L=(L,\vee,\wedge)$ be an infinite lattice and $a\in L$. Then $|P_a(\mathbf L)|=|L|$.
\end{lemma}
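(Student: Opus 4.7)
The plan is to sandwich $|P_a(\mathbf L)|$ between $|L|$ and $|L|$ using trivial inclusions and the standard cardinal identity $|L\times L|=|L|$ for infinite $L$.

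For the upper bound, I would simply note that $P_a(\mathbf L)\subseteq L^2$, so
\[
|P_a(\mathbf L)|\leq|L^2|=|L|,
\]
where the equality uses the well-known fact that every infinite set has the same cardinality as its square.

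For the lower bound, I would reuse the observation already recorded in the proof of Lemma~\ref{lem4}(ii): the inclusion
\[
P_a(\mathbf L)\supseteq(\{a\}\times L)\cup(L\times\{a\})
\]
holds because for every $x\in L$ one has $x\wedge a\leq a\leq x\vee a$, hence $(a,x)$ and $(x,a)$ both lie in $P_a(\mathbf L)$. Consequently $|P_a(\mathbf L)|\geq|L|$.

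Combining the two bounds via the Cantor--Schr\"oder--Bernstein theorem yields $|P_a(\mathbf L)|=|L|$. There is essentially no obstacle here; the only non-trivial ingredient is the cardinal identity $|L|^2=|L|$ for infinite cardinals, which is a standard consequence of the axiom of choice and requires no further comment in this context.
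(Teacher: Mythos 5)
Your proof is correct and follows essentially the same route as the paper: the paper also sandwiches $|P_a(\mathbf L)|$ between $|L\times\{a\}|$ and $|L^2|$ and invokes $|L|^2=|L|$ for infinite $L$. The only cosmetic difference is that you use the slightly larger subset $(\{a\}\times L)\cup(L\times\{a\})$ for the lower bound where the paper uses just $L\times\{a\}$.
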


\begin{proof}
We have $L\times\{a\}\subseteq P_a(\mathbf L)\subseteq L^2$ and hence
\[
|L|=|L\times\{a\}|\leq|P_a(\mathbf L)|\leq|L^2|=|L|^2=|L|.
\]
\end{proof}

\begin{lemma}
Let $\mathbf L=(L,\vee,\wedge)$ be a finite lattice and 
$a,b\in L$ with $a<b$ and $L=[0,a]\cup[a,b]\cup[b,1]$. Then
\[
\begin{array}{@{}r c l@{}}
|P_{ab}(\mathbf L)|&=&2|[0,a]|\cdot|[b,1]|\\
\multicolumn{3}{l}{%
\phantom{xxxi}+|\{(x,y)\in L^2\mid a<x,y<b;x\wedge y=a;x\vee y=b\}|.}
\end{array}
\]
\end{lemma}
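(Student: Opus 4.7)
The plan is to partition $L$ into three disjoint subsets and count the pairs of $P_{ab}(\mathbf L)$ case by case. Since $L = [0,a]\cup[a,b]\cup[b,1]$ with $a<b$, I would set $A := [0,a)$, $M := [a,b]$, and $B := (b,1]$, so that $|A| = |[0,a]|-1$ and $|B| = |[b,1]|-1$. Two structural remarks drive all the counting: $[0,a]$ is closed under joins (so $x,y\le a$ forces $x\vee y\le a$), and dually $[b,1]$ is closed under meets. This will immediately dispose of the diagonal boxes.

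Next I would decompose $P_{ab}(\mathbf L)$ as a disjoint union indexed by the nine boxes $A\times A,\,A\times M,\dots,B\times B$, and evaluate each box using the defining conditions $x\wedge y\le a$ and $x\vee y\ge b$. The corner boxes are immediate: $A\times A$ and $B\times B$ are empty by the closure remarks, while $A\times B$ and $B\times A$ lie entirely in $P_{ab}(\mathbf L)$ (since $x<a$ gives $x\wedge y\le a$ and $y>b$ gives $x\vee y\ge b$), contributing $2|A||B|$ in total. The four mixed boxes $A\times M$, $M\times A$, $M\times B$, $B\times M$ each collapse to a line: for instance in $A\times M$ the condition $x\vee y\ge b$ combined with $x<a\le y\le b$ forces $y=b$, yielding $|A|$ pairs; by the obvious symmetries the four mixed boxes together contribute $2|A|+2|B|$.

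The heart of the argument is the central box $M\times M$. For $x,y\in[a,b]$ one has automatically $a\le x\wedge y$ and $x\vee y\le b$, so membership in $P_{ab}(\mathbf L)$ is equivalent to the equalities $x\wedge y=a$ and $x\vee y=b$. I would split this by whether $x$ or $y$ equals $a$ or $b$: if $x=a$ the equation $x\vee y=b$ forces $y=b$; if $x=b$ the equation $x\wedge y=a$ forces $y=a$; and in the remaining case $a<x<b$ both equations force $a<y<b$ (otherwise $y\in\{a,b\}$ would reverse back to one of the boundary cases and contradict $a<x<b$). Thus the $M\times M$ contribution is $2$ (from the pairs $(a,b)$ and $(b,a)$) plus exactly the cardinality $|\{(x,y)\in L^2\mid a<x,y<b;\,x\wedge y=a;\,x\vee y=b\}|$ appearing in the statement.

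Adding the nine contributions and using the identity $2|A||B|+2|A|+2|B|+2 = 2(|A|+1)(|B|+1) = 2|[0,a]|\cdot|[b,1]|$ yields the claimed formula. The main obstacle is the bookkeeping in the $M\times M$ case: one must verify that $(a,b)$ and $(b,a)$ are counted exactly once and that no other boundary pair sneaks in, otherwise the constants will fail to assemble into the clean product $2|[0,a]|\cdot|[b,1]|$.
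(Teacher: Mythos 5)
Your proof is correct and follows essentially the same strategy as the paper: partition $L^2$ into the nine boxes determined by the position of each coordinate relative to $a$ and $b$, and count each box directly. The only difference is a boundary convention — the paper uses the classes $x\le a$, $a<x<b$, $x\ge b$, which makes the two surviving corner boxes contribute exactly $|[0,a]|\cdot|[b,1]|$ each and leaves nothing to reassemble, whereas your choice of putting $a$ and $b$ into the middle class forces the extra bookkeeping $2|A||B|+2|A|+2|B|+2=2|[0,a]|\cdot|[b,1]|$ at the end; both versions are sound.
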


\begin{proof}
Put $c:=|[0,a]|$ and $d:=|[b,1]|$. Then
$$
\begin{array}{@{}l r}
|\{(x,y)\in P_{ab}(\mathbf L)\mid a<x<b;a<y<b\}|& \\
\multicolumn{2}{r}{=|\{(x,y)\in L^2\mid a<x,y<b;x\wedge y=a; x\vee y=b\}|, \text{ and}}
\end{array}
$$
\begin{align*}
    |\{(x,y)\in P_{ab}(\mathbf L)\mid x\leq a;y<b\}| & =0, \\
|\{(x,y)\in P_{ab}(\mathbf L)\mid x\leq a;y\geq b\}| & =cd, \\
  |\{(x,y)\in P_{ab}(\mathbf L)\mid a<x<b;y\leq a\}| & =0, \\
  |\{(x,y)\in P_{ab}(\mathbf L)\mid a<x<b;y\geq b\}| & =0, \\
|\{(x,y)\in P_{ab}(\mathbf L)\mid x\geq b;y\leq a\}| & =cd, \\
    |\{(x,y)\in P_{ab}(\mathbf L)\mid x\geq b;y>a\}| & =0.
\end{align*}
Adding up these numbers yields
\[
\begin{array}{@{}r c l@{}}
|P_{ab}(\mathbf L)|&=&2cd\\
\multicolumn{3}{l}{%
\phantom{xxxi}+|\{(x,y)\in L^2\mid a<x,y<b;x\wedge y=a;x\vee y=b\}|.}
\end{array}
\]
\end{proof}

\begin{lemma}\label{lem3}
Let $\mathbf L=(L,\vee,\wedge)$ be a finite lattice and $a,b\in L$ with $a<b$. Then
\begin{enumerate}[{\rm(i)}]
\item $|P_{ab}(\mathbf L)|$ is even,
\item $|P_{ab}(\mathbf L)|\geq2|[0,a]|\cdot|[b,1]|$.
\end{enumerate}
\end{lemma}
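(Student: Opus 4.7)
First I would unpack the membership condition: since $a<b$, we have $(x,y)\in P_{ab}(\mathbf L)$ exactly when $x\wedge y\leq a$ and $b\leq x\vee y$ (the other two inequalities being automatic).

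For (i), the plan is to use the fixed-point-free involution $\sigma\colon(x,y)\mapsto(y,x)$. Since the membership condition just displayed is symmetric in $x$ and $y$, $\sigma$ maps $P_{ab}(\mathbf L)$ to itself, and it is clearly an involution. A fixed point would be a pair $(x,x)$ satisfying $x=x\wedge x\leq a$ and $b\leq x\vee x=x$, which forces $b\leq a$, contradicting $a<b$. Hence $\sigma$ has no fixed points and partitions $P_{ab}(\mathbf L)$ into two-element orbits, so $|P_{ab}(\mathbf L)|$ is even.

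For (ii), I would exhibit a large explicit subset. Any pair $(x,y)$ with $x\in[0,a]$ and $y\in[b,1]$ satisfies $x\wedge y\leq x\leq a$ and $b\leq y\leq x\vee y$, so it lies in $P_{ab}(\mathbf L)$; by symmetry the same holds for pairs in $[b,1]\times[0,a]$. Thus
\[
\bigl([0,a]\times[b,1]\bigr)\cup\bigl([b,1]\times[0,a]\bigr)\subseteq P_{ab}(\mathbf L).
\]
The two sets on the left are disjoint because $a<b$ makes $[0,a]\cap[b,1]=\emptyset$ (any common element would satisfy $b\leq x\leq a$). Each has cardinality $|[0,a]|\cdot|[b,1]|$, so the union has cardinality $2|[0,a]|\cdot|[b,1]|$, yielding the bound.

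There is no real obstacle here: (i) is a textbook orbit-counting argument under an involution (mirroring part (i) of Lemma~\ref{lem4}, but with the opposite parity because the diagonal now produces no fixed points), and (ii) is a direct inclusion argument (the counterpart of part (iv) of Lemma~\ref{lem4}, with disjointness replacing the odd-cardinality correction). The only point that needs a moment's care is verifying that the diagonal contributes no fixed points under $\sigma$, which is exactly where the hypothesis $a<b$ is used in (i), and that $[0,a]\cap[b,1]=\emptyset$, which is where it is used in (ii).
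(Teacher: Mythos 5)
Your proposal is correct and follows essentially the same route as the paper: part (i) via the fixed-point-free involution $(x,y)\mapsto(y,x)$ (the paper records exactly the two facts that $(x,y)\in P_{ab}(\mathbf L)\Leftrightarrow(y,x)\in P_{ab}(\mathbf L)$ and $(x,x)\notin P_{ab}(\mathbf L)$), and part (ii) via the inclusion $\bigl([0,a]\times[b,1]\bigr)\cup\bigl([b,1]\times[0,a]\bigr)\subseteq P_{ab}(\mathbf L)$. You simply spell out the details (where $a<b$ is used, and the disjointness of the two rectangles) that the paper leaves implicit.
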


\begin{proof}
\
\begin{enumerate}[(i)]
\item For every $x,y\in L$ we have
\begin{align*}
& (x,y)\in P_{ab}(\mathbf L)\Leftrightarrow(y,x)\in P_{ab}(\mathbf L), \\
& (x,x)\notin P_{ab}(\mathbf L).
\end{align*}
\item This follows from $P_{ab}(\mathbf L)\supseteq([0,a]\times[b,1])\cup([b,1]\times[0,a])$.
\end{enumerate}
\end{proof}

\begin{lemma}\label{lem1}
Let $\mathbf L=(L,\vee,\wedge)$ be a chain and $a\in L$. Then $P_a(\mathbf L)=\big((a]\times[a)\big)\cup\big([a)\times(a]\big)$.
\end{lemma}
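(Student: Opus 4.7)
The plan is to prove both inclusions directly from the definition $P_a(\mathbf L)=\{(x,y)\in L^2\mid x\wedge y\leq a\leq x\vee y\}$, exploiting the fact that in a chain the binary meet is the minimum and the binary join is the maximum.

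For the inclusion $\supseteq$ (which in fact holds in any lattice and does not require the chain hypothesis), I would take $(x,y)\in(a]\times[a)$, so that $x\leq a$ and $a\leq y$. Then $x\wedge y\leq x\leq a$ and $a\leq y\leq x\vee y$, so $(x,y)\in P_a(\mathbf L)$. By symmetry the same argument disposes of $(x,y)\in[a)\times(a]$, and hence the union is contained in $P_a(\mathbf L)$.

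For the inclusion $\subseteq$, the chain hypothesis is essential. Given $(x,y)\in P_a(\mathbf L)$, linearity of $\leq$ yields the dichotomy $x\leq y$ or $y\leq x$. In the first case $x\wedge y=x$ and $x\vee y=y$, so the defining inequality $x\wedge y\leq a\leq x\vee y$ reduces to $x\leq a\leq y$, placing $(x,y)\in(a]\times[a)$. In the second case the symmetric computation gives $(x,y)\in[a)\times(a]$. Either way, $(x,y)$ lies in the right-hand side.

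There is no real obstacle here: the argument is a direct unwinding of definitions together with a single case split afforded by the linear order. The only point worth emphasising is that the nontrivial direction uses the chain hypothesis precisely to replace the lattice operations by the pointwise $\min$ and $\max$; without linearity one only obtains $P_a(\mathbf L)\supseteq\big((a]\times[a)\big)\cup\big([a)\times(a]\big)$ but the reverse inclusion can fail.
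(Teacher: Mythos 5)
Your proof is correct and follows essentially the same route as the paper's: the paper compresses the argument into a single chain of set equalities, where the middle step is exactly your case split $\min(x,y)\leq a\leq\max(x,y)$ iff $x\leq a\leq y$ or $y\leq a\leq x$. Your added remark that the $\supseteq$ inclusion holds in any lattice is a nice observation but does not change the substance.
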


\begin{proof}
We have
$$
\begin{array}{r@{\,}c@{\,} l}
P_a(\mathbf L) & =&\{(x,y)\in L^2\mid x\wedge y\leq a\leq x\vee y\}\\
\multicolumn{3}{r}{\phantom{x}=\{(x,y)\in L^2\mid x\leq a\leq y\}\cup\{(x,y)\in L^2\mid y\leq a\leq x\}}\\
							 &=&\big((a]\times[a)\big)\cup\big([a)\times(a]\big).
\end{array}
$$
\end{proof}

\begin{lemma}
Let $\mathbf L=(L,\vee,\wedge)$ be a finite chain and $a\in L$. 
Then $|P_a(\mathbf L)|=2|[0,a]|\cdot|[a,1]|-1$.
\end{lemma}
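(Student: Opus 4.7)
The plan is to apply Lemma~\ref{lem1} directly and then compute the cardinality of the resulting union by inclusion--exclusion. Since $\mathbf L$ is a chain with least element $0$ and greatest element $1$, the principal ideal and principal filter become $(a]=[0,a]$ and $[a)=[a,1]$, so Lemma~\ref{lem1} gives
\[
P_a(\mathbf L)=\big([0,a]\times[a,1]\big)\cup\big([a,1]\times[0,a]\big).
\]

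Next, I would observe that each of the two sets on the right has cardinality $|[0,a]|\cdot|[a,1]|$, since a product set has cardinality equal to the product of the factor cardinalities (finite by assumption). The remaining step is to identify their intersection, which is
\[
\big([0,a]\cap[a,1]\big)\times\big([a,1]\cap[0,a]\big)=\{a\}\times\{a\}=\{(a,a)\},
\]
because in the chain $\mathbf L$ the only element lying both below and above $a$ is $a$ itself.

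Applying inclusion--exclusion then yields
\[
|P_a(\mathbf L)|=2|[0,a]|\cdot|[a,1]|-1,
\]
which is the desired formula. The argument is essentially a routine counting step once Lemma~\ref{lem1} is in hand; the only potential pitfall is forgetting to subtract the overlap $\{(a,a)\}$, which would give the parity-incorrect value $2|[0,a]|\cdot|[a,1]|$ and contradict Lemma~\ref{lem4}(i).
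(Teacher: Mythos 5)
Your proposal is correct and follows exactly the route the paper takes: the paper's proof is the one-line remark that the formula follows from Lemma~\ref{lem1}, and you have simply filled in the routine inclusion--exclusion count (with the correct observation that the overlap is $\{(a,a)\}$). Nothing further is needed.
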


\begin{proof}
This follows from Lemma~\ref{lem1}.
\end{proof}

\section{Representation of Kleene lattices}

In the following let $(a_1<\cdots<a_n)$ denote an $n$-element chain.

\begin{lemma}
For every non-negative integer $n$ we have
\begin{align*}
     P_a(a<a_1<\cdots<a_n) & =\big((a,a_n)\sqsubset\cdots\sqsubset(a,a_1)\sqsubset(a,a)\\
     & \sqsubset(a_1,a)\sqsubset\cdots\sqsubset(a_n,a)\big), \\
\end{align*}    
\vskip-1.343cm
\begin{align*} 
P_{ab}(a<b<&\, \, a_1<\cdots<a_n)  =\big((a,a_n)\sqsubset\cdots\sqsubset(a,a_1)\\
     &\sqsubset(a,b)\sqsubset(b,a)\sqsubset(a_1,a)\sqsubset\cdots\sqsubset(a_n,a)\big).
\end{align*}
\end{lemma}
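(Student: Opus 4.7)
The plan is to handle both identities by direct unpacking of definitions, exploiting that in a chain $x\wedge y=\min(x,y)$ and $x\vee y=\max(x,y)$, together with the coordinate-wise characterization $(x,y)\sqsubseteq(z,v)$ iff $x\leq z$ and $v\leq y$. No induction on $n$ is required.

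For the first identity, I apply Lemma~\ref{lem1} directly: since $a$ is the bottom of the chain, $(a]=\{a\}$ and $[a)=\{a,a_1,\ldots,a_n\}$, so
\[
P_a(\mathbf L)=\{(a,a_i)\mid 0\leq i\leq n\}\cup\{(a_j,a)\mid 0\leq j\leq n\}
\]
with the convention $a_0:=a$, yielding the expected $2n+1$ distinct elements. The total order is then verified case by case: among pairs $(a,a_i)$, the relation $(a,a_j)\sqsubseteq(a,a_i)$ holds iff $a_i\leq a_j$, so larger index gives strictly smaller element; among pairs $(a_j,a)$, the index order is preserved; and $(a,a_i)\sqsubseteq(a_j,a)$ is automatic because $a$ is the bottom of $\mathbf L$, with the converse failing whenever at least one of $i,j$ is positive. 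Assembling these comparisons gives precisely the claimed chain.

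For the second identity, I compute $P_{ab}(\mathbf L)$ from the definition: $(x,y)\in P_{ab}(\mathbf L)$ requires $\min(x,y)\leq a$ and $\max(x,y)\geq b$. Since $a$ is the bottom, the first condition forces $\min(x,y)=a$, so one coordinate equals $a$; since $a<b$, the second condition then forces the other coordinate to lie in $\{b,a_1,\ldots,a_n\}$. This gives exactly the $2(n+1)$ elements listed, with $(a,a)\notin P_{ab}(\mathbf L)$ since $\max(a,a)=a<b$. The chain order follows by the same coordinate-wise check, the only new comparisons needed being $(a,b)\sqsubseteq(b,a)$ (which holds since $a\leq b$ in both coordinates) and $(a,a_1)\sqsubseteq(a,b)$ (which holds since $b\leq a_1$).

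No serious obstacle is anticipated; the main care needed is avoiding double-counting the overlap $(a,a)$ in the first identity and correctly excluding it in the second. Both identities are immediate consequences of direct case analysis in a chain, once the underlying set has been identified via Lemma~\ref{lem1} or the defining inequalities.
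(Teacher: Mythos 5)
Your proposal is correct and fills in exactly the routine verification the paper leaves to the reader (its proof is just ``easy to verify''): identify the underlying set via Lemma~\ref{lem1} resp.\ the defining inequalities, then check the coordinatewise order. The case analysis, the handling of the overlap at $(a,a)$, and the exclusion of $(a,a)$ from $P_{ab}$ are all handled correctly, so nothing further is needed.
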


\begin{proof}
The result is easy to verify.
\end{proof}

In the following for every positive integer $n$ let $\mathbf C_n=(C_n,\vee,\wedge)$ denote an $n$-element chain. It is easy to see that there exists exactly one antitone involution on $\mathbf C_n$, and $\mathbf C_n$ together with this antitone involution forms a Kleene lattice.

\begin{corollary}
For every positive integer $n$ we have
\[
(C_n,\vee,\wedge,{}')\cong\big(P_S(\mathbf C_{\lfloor n/2\rfloor+1}),\sqcup,\sqcap,{}'\big)
\]
where $S$ denotes the set consisting of the smallest element of $\mathbf C_{\lfloor n/2\rfloor+1}$ if $n$ is odd and $S$ denotes the set consisting of the two smallest elements of $\mathbf C_{\lfloor n/2\rfloor+1}$ if $n$ is even.
\end{corollary}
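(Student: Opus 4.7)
The plan is to derive the corollary directly from the preceding lemma by a case split on the parity of $n$, noting that the lemma already exhibits $P_a$ and $P_{ab}$ as chains of certain lengths, and then to check that the involution $(x,y)\mapsto(y,x)$ induced on these chains agrees with the unique antitone involution on $\mathbf C_n$.

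For the cardinality match, suppose first that $n$ is odd, so $n=2k+1$ and $\lfloor n/2\rfloor+1=k+1$. I would write $\mathbf C_{k+1}$ as the chain $(a<a_1<\cdots<a_k)$ and take $S=\{a\}$. The first half of the preceding lemma then presents $P_a(\mathbf C_{k+1})$ as a chain of length $2k+1=n$. For $n$ even, write $n=2k$, so $\lfloor n/2\rfloor+1=k+1$; display $\mathbf C_{k+1}$ as $(a<b<a_1<\cdots<a_{k-1})$ and take $S=\{a,b\}$. The second half of the lemma then presents $P_{ab}(\mathbf C_{k+1})$ as a chain of length $2(k-1)+2=2k=n$. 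So in both cases the underlying lattice reduct is a chain with $n$ elements, which is already isomorphic as a lattice to $\mathbf C_n$.

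It remains to verify that the isomorphism respects the antitone involution. Since on any finite chain there is only one antitone involution, namely the order-reversing bijection, it suffices to check that $(x,y)\mapsto(y,x)$ is order-reversing and involutive on the explicit chain given by the lemma, and that the bijection that reads the chain from bottom to top is compatible with it. This is immediate from the listings in the lemma: in the odd case the involution swaps the $i$-th element from the bottom $(a,a_i)$ with the $i$-th element from the top $(a_i,a)$ and fixes the middle element $(a,a)$; in the even case it swaps $(a,a_i)\leftrightarrow(a_i,a)$ and $(a,b)\leftrightarrow(b,a)$, with no fixed point. In each case this is the unique order-reversing involution on the chain.

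I do not anticipate a real obstacle: the previous lemma has done the structural work, and the only thing to say is the arithmetic $2\lfloor n/2\rfloor+\varepsilon=n$ (with $\varepsilon=1$ or $2$ depending on parity) together with the observation that a finite chain carries a unique antitone involution, so any order isomorphism is automatically a Kleene-lattice isomorphism.
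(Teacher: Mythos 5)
Your proposal is correct and follows exactly the route the paper intends: the corollary is derived from the preceding lemma's explicit description of $P_a$ and $P_{ab}$ of a chain, combined with the arithmetic $2\lfloor n/2\rfloor+\varepsilon=n$ and the paper's remark that a finite chain carries a unique antitone involution (the paper leaves this derivation implicit, offering no written proof). Your write-up just makes these steps explicit.
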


\begin{corollary}
If $(n_i;i\in I)$ is a non-void family of positive integers then
\[
\prod_{i\in I}(C_{n_i},\vee,\wedge,{}')\cong\big(P_{\prod\limits_{i\in I}S_i}(\prod_{i\in I}\mathbf C_{\lfloor n_i/2\rfloor+1}),\sqcup,\sqcap,{}'\big)
\]
where for every $i\in I$, $S_i$ denotes the set consisting of the smallest element of $\mathbf C_{\lfloor n_i/2\rfloor+1}$ if $n_i$ is odd and $S_i$ denotes the set consisting of the two smallest elements of $\mathbf C_{\lfloor n_i/2\rfloor+1}$ if $n_i$ is even.
\end{corollary}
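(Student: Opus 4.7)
The plan is to derive this corollary directly from the preceding one together with Theorem~\ref{thr} on direct products, so essentially no new computation is needed.

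First, I would apply the preceding corollary coordinatewise: for every $i\in I$ we obtain a Kleene lattice isomorphism
\[
(C_{n_i},\vee,\wedge,{}')\cong\mathbf P_{S_i}(\mathbf C_{\lfloor n_i/2\rfloor+1}),
\]
where $S_i$ is chosen as described (one- or two-element, depending on the parity of $n_i$). Since the lattice operations and the involution on a direct product of Kleene lattices are defined componentwise, the family of these isomorphisms combines into a single isomorphism
\[
\prod_{i\in I}(C_{n_i},\vee,\wedge,{}')\cong\prod_{i\in I}\mathbf P_{S_i}(\mathbf C_{\lfloor n_i/2\rfloor+1}).
\]

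Second, I would apply Theorem~\ref{thr} to the family $\mathbf L_i:=\mathbf C_{\lfloor n_i/2\rfloor+1}$ with subsets $S_i$, setting $\mathbf L:=\prod_{i\in I}\mathbf L_i$ and $S:=\prod_{i\in I}S_i$. Note that $S$ is non-empty because each $S_i$ is non-empty (this is the only hypothesis of Theorem~\ref{thr} that requires a brief verification). The theorem then yields
\[
\prod_{i\in I}\mathbf P_{S_i}(\mathbf L_i)\cong\mathbf P_S(\mathbf L).
\]

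Composing the two isomorphisms above gives exactly the asserted isomorphism. There is no genuine obstacle: the real content is already contained in the preceding corollary (for single chains) and in Theorem~\ref{thr} (for direct products); the present statement is simply the combination of these two facts, and the proof amounts to observing that the respective isomorphisms can be composed.
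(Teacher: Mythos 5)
Your proof is correct and is exactly the derivation the paper intends: the corollary is stated without proof precisely because it follows by combining the preceding corollary (applied coordinatewise) with Theorem~\ref{thr} on direct products, which is what you do. No gaps; the observation that $S=\prod_{i\in I}S_i$ is non-empty since each $S_i$ is non-empty is the only hypothesis to check, and you check it.
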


\begin{example}
The four-element chain $\mathbf C_4$ is as a Kleene lattice isomorphic to
\[
\big(P_{ab}(a<b<c),\sqcup,\sqcap,{}'\big).
\]
Hence the direct product of the Kleene lattices $\mathbf C_4$ and $\mathbf C_4$ is isomorphic to
\[
\Big(P_{\{(a,a),(b,b)\}}\big((a<b<c)\times(a<b<c)\big),\sqcup,\sqcap,{}'\Big).
\]
\end{example}

In the sequel we use the following notation: 
If $\mathbf L_1$ and $\mathbf L_2$ are lattices with a greatest and a smallest element, respectively, then by $\mathbf L_1+_a\mathbf L_2$ we denote the ordinal sum of $\mathbf L_1$ and $\mathbf L_2$ where the greatest element $a$ of $\mathbf L_1$ is identified with the smallest element of $\mathbf L_2$. If $\mathbf L_1$, $\mathbf L_2$ and $\mathbf L_3$ are lattices with a greatest, with a smallest and a greatest and with a smallest element, respectively, then by $\mathbf L_1+_a\mathbf L_2+_b\mathbf L_3$ we denote the ordinal sum of $\mathbf L_1$, $\mathbf L_2$ and $\mathbf L_3$ where the greatest element $a$ of $\mathbf L_1$ is identified with the smallest element of $\mathbf L_2$ and the greatest element $b$ of $\mathbf L_2$ is identified with the smallest element of $\mathbf L_3$.  
Finally, let $\mathbf1$ denote the trivial Boolean algebra.

\begin{lemma}\label{lemmalbl}
	Let $\mathbf L_1=(L_1,\vee,\wedge)$ and $\mathbf L_2=(L_2,\vee,\wedge)$ be 
	distributive lattices  with greatest element $a$ and smallest element $b$, respectively, and $\mathbf B=(B,\vee,\wedge,{}') $ a  Boolean algebra with smallest element $a$ and greatest element $b$. If $a< b$ or 
	$a=b$ is join-irreducible in $\mathbf L_1$ and meet-irreducible 
	in $\mathbf L_2$ then
	$$
	\begin{array}{@{}c@{}}
		\big(P_{ab}(\mathbf L_1+_a\mathbf B+_b\mathbf L_2),\sqcup,\sqcap\big)%
		\cong \phantom{xxxxxxxxxxxxx\mathbf L_1+_a\mathbf B+_b\mathbf L_2)}\\[0.2cm]%
		\phantom{xxxxx\mathbf L_1+_a\mathbf B+_b\mathbf L_2)}%
		(\mathbf L_1\times\mathbf L_2^d)+_{(a,b)}\mathbf B+_{(b,a)}%
		(\mathbf L_2\times\mathbf L_1^d).\phantom{xxxx}
	\end{array}
	$$
\end{lemma}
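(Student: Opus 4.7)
The plan is to construct an explicit lattice isomorphism $\varphi$ from the right-hand side to $P_{ab}(\mathbf L)$, where $\mathbf L := \mathbf L_1+_a\mathbf B+_b\mathbf L_2$, and verify its properties by case analysis on the tripartition $L = (L_1 \setminus \{a\}) \cup B \cup (L_2 \setminus \{b\})$ induced by the ordinal sum.

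First, I would define $\varphi$ piecewise: on $\mathbf L_1 \times \mathbf L_2^d$ set $\varphi(x,y) := (x,y)$; on $\mathbf B$ set $\varphi(z) := (z, z')$, where $'$ denotes the Boolean complement in $\mathbf B$; and on $\mathbf L_2 \times \mathbf L_1^d$ set $\varphi(x,y) := (x,y)$. The ordinal-sum identifications are respected because the top $(a,b)$ of the lower summand coincides with $\varphi(a)$ (as $a' = b$ in $\mathbf B$), and $\varphi(b) = (b,a)$ coincides with the bottom of the upper summand. That the image lies in $P_{ab}(\mathbf L)$ is immediate: for $(x,y)$ in the lower piece one has $x \leq a \leq b \leq y$ in $\mathbf L$, so $x \wedge y \leq a$ and $x \vee y \geq b$; for $(z,z')$ the Boolean identities in $\mathbf B$ give $z \wedge z' = a$ and $z \vee z' = b$; the upper piece is symmetric.

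Next I would establish surjectivity by splitting an arbitrary $(u,v) \in P_{ab}(\mathbf L)$ into nine cases based on which part of the tripartition contains $u$ and $v$. Most cases are immediate from $u \wedge v \leq a \leq b \leq u \vee v$ together with the ordinal-sum structure, placing $(u,v)$ into one of the three summands; in particular, when both $u, v \in B$ the Boolean structure forces $v = u'$, putting the element in the middle piece. The only delicate cases are $u, v \in L_1 \setminus \{a\}$ and $u, v \in L_2 \setminus \{b\}$: if $a < b$ they are empty since then $u \vee v \leq a < b$ or $u \wedge v \geq b > a$; if $a = b$ they are empty by the join-irreducibility of $a$ in $\mathbf L_1$ and the meet-irreducibility of $b$ in $\mathbf L_2$, which is exactly where the lemma's hypothesis is applied.

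Finally I would verify that $\varphi$ preserves and reflects the lattice operations. Within each summand this is immediate: $\mathbf L_1 \times \mathbf L_2^d$ has joins and meets componentwise with $\vee$ and $\wedge$ swapped in the second coordinate, matching the twist-product operations, and on $\mathbf B$ the antitone involution yields $(z_1, z_1') \sqcup (z_2, z_2') = (z_1 \vee z_2, (z_1 \vee z_2)')$. Between pieces, a short computation shows, for instance, $(x,y) \sqcup (z, z') = (z, z')$ for a lower-piece $(x,y)$ and middle $z$ (using $x \leq a \leq z$ and $z' \leq b \leq y$), agreeing with the ordinal-sum join that takes the higher element; the remaining cross-summand operations are analogous, and reflection of order is automatic since the orders on the pieces coincide with the restrictions of the target order. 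The principal obstacle is the surjectivity argument in the degenerate case $a = b$, where the irreducibility hypotheses are invoked in an essential way; this is the only place in the proof where the lemma's hypothesis is actively used.
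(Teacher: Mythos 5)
Your proposal is correct and follows essentially the same route as the paper: the paper's (one-line) proof consists precisely of the set decomposition $P_{ab}(\mathbf L_1+_a\mathbf B+_b\mathbf L_2)=(L_1\times L_2)\cup\{(x,x')\mid x\in B\}\cup(L_2\times L_1)$, and your argument is a fully worked-out verification of that same decomposition, including the correct identification of where the hypothesis ($a<b$, or irreducibility when $a=b$) is needed to rule out pairs lying entirely in $L_1\setminus\{a\}$ or $L_2\setminus\{b\}$.
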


\begin{proof}
	We have
	\[
	P_{ab}(\mathbf L_1+_a\mathbf B+_b\mathbf L_2)=(L_1\times L_2)\cup\{(x,x')\mid x\in B\}\cup(L_2\times L_1).
	\]
\end{proof}

\begin{corollary}\label{lemmacbc}
	Let $\mathbf L_1=(L_1,\vee,\wedge)$ and $\mathbf L_2=(L_2,\vee,\wedge)$ be 
	chains  with greatest element $a$ and smallest element $b$, respectively, and $\mathbf B=(B,\vee,\wedge,{}') $ a  Boolean algebra with smallest element $a$ and greatest element $b$. Then
	$$
	\begin{array}{@{}c@{}}
		\big(P_{ab}(\mathbf L_1+_a\mathbf B+_b\mathbf L_2),\sqcup,\sqcap\big)%
		\cong \phantom{xxxxxxxxxxxxx\mathbf L_1+_a\mathbf B+_b\mathbf L_2)}\\[0.2cm]%
		\phantom{xxxxx\mathbf L_1+_a\mathbf B+_b\mathbf L_2)}%
		(\mathbf L_1\times\mathbf L_2^d)+_{(a,b)}\mathbf B+_{(b,a)}%
		(\mathbf L_2\times\mathbf L_1^d).\phantom{xxxx}
	\end{array}
	$$
\end{corollary}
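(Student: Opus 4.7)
The plan is to derive Corollary~\ref{lemmacbc} as a direct instance of Lemma~\ref{lemmalbl}. The work amounts to verifying that the hypotheses of the lemma hold automatically whenever $\mathbf L_1$ and $\mathbf L_2$ are chains, so that the isomorphism can simply be read off from the lemma.

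First I would note that every chain is a distributive lattice, which takes care of the distributivity assumption on $\mathbf L_1$ and $\mathbf L_2$. The only nontrivial point concerns the disjunctive hypothesis of Lemma~\ref{lemmalbl}: either $a<b$, or $a=b$ together with join-irreducibility of $a$ in $\mathbf L_1$ and meet-irreducibility of $b$ in $\mathbf L_2$. If $a<b$ there is nothing to check, so I focus on the degenerate case $a=b$, in which the Boolean algebra $\mathbf B$ collapses to the trivial Boolean algebra $\mathbf 1$.

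In this case I need to verify that the top element $a$ of the chain $\mathbf L_1$ is join-irreducible and the bottom element $b$ of the chain $\mathbf L_2$ is meet-irreducible. Both facts follow at once from the identities $x\vee y=\max(x,y)$ and $x\wedge y=\min(x,y)$ that hold in any chain: if $a=x\vee y$ then $a\in\{x,y\}$, so $a$ cannot be written as a join of two strictly smaller elements, and dually for $b$. Thus the irreducibility clause of Lemma~\ref{lemmalbl} is satisfied no matter which of the two alternatives ($a<b$ or $a=b$) actually occurs.

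Having verified the hypotheses, I would invoke Lemma~\ref{lemmalbl} to obtain the claimed isomorphism
\[
\big(P_{ab}(\mathbf L_1+_a\mathbf B+_b\mathbf L_2),\sqcup,\sqcap\big)
\cong(\mathbf L_1\times\mathbf L_2^d)+_{(a,b)}\mathbf B+_{(b,a)}(\mathbf L_2\times\mathbf L_1^d).
\]
There is essentially no obstacle here; the only mild subtlety is to remember to handle the case $a=b$ separately, and the automatic join/meet irreducibility of every element in a chain makes this immediate.
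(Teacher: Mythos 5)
Your proposal is correct and follows exactly the route the paper intends: the corollary is stated as an immediate instance of Lemma~\ref{lemmalbl}, the only point to check being that chains are distributive and that in a chain every element is join- and meet-irreducible (so the $a=b$ alternative of the lemma's hypothesis is automatic). Your verification of that point via $x\vee y=\max(x,y)$ and $x\wedge y=\min(x,y)$ is precisely the missing detail the paper leaves implicit.
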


\begin{corollary}\label{lem5a}
	Let $\mathbf L_1=(L_1,\vee,\wedge)$ and $\mathbf L_2=(L_2,\vee,\wedge)$ be distributive lattices with greatest element $a$ and smallest element $b$, respectively, and $\mathbf B=(B,\vee,\wedge,{}')$ a  Boolean algebra with smallest element $a$ and greatest element $b$. If $a< b$ or 
	$a=b$ is join-irreducible in $\mathbf L_1$ and meet-irreducible 
	in $\mathbf L_2$ then 
	\begin{align*}
		\big(P_{ab}(\mathbf L_1+_a\mathbf B),\sqcup,\sqcap\big) & \cong\mathbf L_1+_{(a,b)}\mathbf B+_{(b,a)}\mathbf L_1^d, \\
		\big(P_{ab}(\mathbf B+_b\mathbf L_2),\sqcup,\sqcap\big) & \cong\mathbf L_2^d+_{(a,b)}\mathbf B+_{(b,a)}\mathbf L_2.
	\end{align*}
\end{corollary}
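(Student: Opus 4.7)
The plan is to derive both isomorphisms as immediate specializations of Lemma~\ref{lemmalbl}, by plugging in the trivial one-element Boolean algebra $\mathbf 1$ for one of the two distributive ``wings''. Concretely, I will take $\mathbf L_2:=\mathbf 1$ for the first equation and $\mathbf L_1:=\mathbf 1$ for the second, so that the three-term ordinal sum appearing in the lemma collapses to the two-term sum that appears on the left-hand side here.

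For the first isomorphism, setting $\mathbf L_2:=\mathbf 1$ and identifying its unique element with $b$ gives $\mathbf B+_b\mathbf 1=\mathbf B$, so the inner lattice in Lemma~\ref{lemmalbl} becomes $\mathbf L_1+_a\mathbf B+_b\mathbf 1=\mathbf L_1+_a\mathbf B$. Since $\mathbf 1^d\cong\mathbf 1$ and a direct product with $\mathbf 1$ is the identity up to canonical isomorphism, one also has $\mathbf L_1\times\mathbf L_2^d\cong\mathbf L_1$ and $\mathbf L_2\times\mathbf L_1^d\cong\mathbf L_1^d$. Thus the right-hand side of the lemma collapses exactly to $\mathbf L_1+_{(a,b)}\mathbf B+_{(b,a)}\mathbf L_1^d$, as desired. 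The second isomorphism will follow by the symmetric choice $\mathbf L_1:=\mathbf 1$, whose unique element is identified with $a$, so that $\mathbf 1+_a\mathbf B=\mathbf B$ and the right-hand side becomes $\mathbf L_2^d+_{(a,b)}\mathbf B+_{(b,a)}\mathbf L_2$.

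Before invoking the lemma I must check that its hypothesis still holds in these specializations. If $a<b$ there is nothing to verify. If $a=b$, then for the first equation the corollary's hypothesis supplies join-irreducibility of $a$ in $\mathbf L_1$, while meet-irreducibility of $a=b$ in $\mathbf L_2=\mathbf 1$ holds vacuously, since the unique element of $\mathbf 1$ cannot be written as the meet of two strictly larger elements. The analogous check, with the roles of $\mathbf L_1$ and $\mathbf L_2$ swapped, handles the second equation.

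The only real obstacle is bookkeeping: one must confirm that the irreducibility clauses of Lemma~\ref{lemmalbl} are vacuously satisfied by $\mathbf 1$ and that the ordinal-sum and direct-product notation behaves correctly when one of the summands collapses. Beyond this, no further lattice-theoretic work is needed, since the substantive content is entirely absorbed into Lemma~\ref{lemmalbl}.
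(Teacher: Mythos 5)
Your proposal is correct and follows exactly the paper's own route: the paper's proof of this corollary is the one-line observation that one should put $\mathbf L_2:=\mathbf 1$ (respectively $\mathbf L_1:=\mathbf 1$) and apply Lemma~\ref{lemmalbl}. Your additional checks that the irreducibility hypotheses hold vacuously for $\mathbf 1$ and that the ordinal sum and product collapse as claimed are sound and merely make explicit what the paper leaves implicit.
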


\begin{proof} It is enough to put $\mathbf L_2:=\mathbf 1$ or 
	 $\mathbf L_1:=\mathbf 1$ and use 
	Lemma \ref{lemmalbl}. 
\end{proof}	

\begin{corollary}\label{lem5}
Let $\mathbf L_1=(L_1,\vee,\wedge)$ and $\mathbf L_2=(L_2,\vee,\wedge)$ be 
distributive lattices  with greatest and smallest element $a$, respectively. 
If $a$ is join-irreducible in $\mathbf L_1$ and meet-irreducible 
in $\mathbf L_2$ then 
\[
\begin{array}{r@{\,\,}c@{\,\,}l}
\big(P_a(\mathbf L_1+_a\mathbf L_2),\sqcup,\sqcap\big)&\cong&(\mathbf L_1\times\mathbf L_2^d)+_{(a,a)}(\mathbf L_2\times\mathbf L_1^d),\\[0.2cm]
\big(P_a ({\mathbf L}_1),\sqcup,\sqcap\big)%
&\cong&\mathbf L_1+_{(a,a)}\mathbf L_1^d,\\[0.2cm]
\big(P_a(\mathbf L_2),\sqcup,\sqcap\big)%
&\cong&\mathbf L_2^d+_{(a,a)}\mathbf L_2.
\end{array}
\]
\end{corollary}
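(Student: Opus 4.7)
The plan is to derive Corollary \ref{lem5} as an immediate specialization of Lemma \ref{lemmalbl}, taking the trivial Boolean algebra $\mathbf B = \mathbf 1$, which forces the distinguished elements to coincide: $a = b$.

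First I would unfold what $\mathbf B = \mathbf 1$ means for both sides of Lemma \ref{lemmalbl}. Since $\mathbf 1$ has a single element serving simultaneously as smallest and greatest, we have $a = b$, so $\{a, b\} = \{a\}$ and hence $P_{ab} = P_a$. The ordinal sum $\mathbf L_1 +_a \mathbf B +_b \mathbf L_2$ collapses to $\mathbf L_1 +_a \mathbf L_2$, because inserting $\mathbf 1$ between the two lattices merely identifies the top of $\mathbf L_1$ with the bottom of $\mathbf L_2$. On the right-hand side $(\mathbf L_1 \times \mathbf L_2^d) +_{(a,b)} \mathbf B +_{(b,a)} (\mathbf L_2 \times \mathbf L_1^d)$, the pairs $(a, b)$ and $(b, a)$ both collapse to $(a, a)$ and the trivial $\mathbf B$ vanishes from the ordinal sum, leaving $(\mathbf L_1 \times \mathbf L_2^d) +_{(a,a)} (\mathbf L_2 \times \mathbf L_1^d)$. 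The hypothesis of Lemma \ref{lemmalbl} in the case $a = b$, namely join-irreducibility of $a$ in $\mathbf L_1$ and meet-irreducibility of $a$ in $\mathbf L_2$, is exactly our hypothesis, so the first isomorphism follows.

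For the second and third isomorphisms, I would specialize further by setting $\mathbf L_2 := \mathbf 1$ or $\mathbf L_1 := \mathbf 1$ in the first isomorphism already obtained. If $\mathbf L_2 = \mathbf 1$, then $\mathbf L_1 +_a \mathbf L_2 \cong \mathbf L_1$, the isomorphism $\mathbf L_2^d \cong \mathbf 1$ gives $\mathbf L_1 \times \mathbf L_2^d \cong \mathbf L_1$, and $\mathbf L_2 \times \mathbf L_1^d \cong \mathbf L_1^d$, producing $\mathbf L_1 +_{(a,a)} \mathbf L_1^d$. The unique element of $\mathbf 1$ is vacuously meet-irreducible, so the hypothesis of Lemma \ref{lemmalbl} is preserved. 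The third isomorphism is symmetric under the interchange $\mathbf L_1 \leftrightarrow \mathbf L_2$.

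No serious obstacle arises; the content of Corollary \ref{lem5} is already packaged in Lemma \ref{lemmalbl}, and the proof amounts to bookkeeping. The only points worth attention are confirming that the irreducibility hypothesis is inherited by $\mathbf 1$ (it is, vacuously) and that the ordinal sum notation behaves well under the degenerate identification $a = b$, which it does because $\mathbf B +_a \mathbf L$ with $\mathbf B = \mathbf 1$ is simply $\mathbf L$ itself.
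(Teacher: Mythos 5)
Your proposal is correct and follows essentially the same route as the paper, which proves the corollary by putting $\mathbf B:=\mathbf 1$ in Lemma~\ref{lemmalbl} (and, for the last two isomorphisms, additionally collapsing $\mathbf L_2$ or $\mathbf L_1$ to $\mathbf 1$, exactly as packaged in Corollary~\ref{lem5a}). Your explicit checks that the irreducibility hypothesis holds vacuously for $\mathbf 1$ and that the ordinal sum degenerates correctly when $a=b$ are the right points to verify and are handled correctly.
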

\begin{proof} It is enough to put $\mathbf B:=\mathbf 1$ and use 
		Lemma \ref{lemmalbl} or Corollary \ref{lem5a}. 
\end{proof}

Finally, we present a non-representable Kleene lattice.

\begin{lemma}\label{Lemma3c} 
Let $\mathbf L=(L,\vee,\wedge)$ be a distributive lattice and $a\in L$ and assume 
$\big(P_a(\mathbf L),\sqsubseteq)$ to have no three-element antichain. Then $a$ is 
comparable with every element of $L$ and join- and meet-irreducible.
\end{lemma}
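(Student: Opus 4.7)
The plan is to argue each of the three conclusions by contrapositive: assuming that $a$ fails the relevant property, I will exhibit a three-element antichain in $\big(P_a(\mathbf L),\sqsubseteq\big)$. Throughout I will use that $(x,y)\sqsubseteq(z,v)$ iff $x\leq z$ and $v\leq y$, so any would-be $\sqsubseteq$-relation reduces to two explicit coordinate inequalities, each easy to falsify.

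For comparability, suppose some $u\in L$ is incomparable with $a$. I would consider the triple $\{(u,a),(a,u),(a,a)\}$; it lies in $P_a(\mathbf L)$ since $u\wedge a\leq a\leq u\vee a$ holds trivially (and likewise for $(a,a)$). Checking any candidate $\sqsubseteq$-relation between two members of the triple forces either $u\leq a$ or $a\leq u$, both excluded by hypothesis. Hence the triple is an antichain, contradicting the assumption on $P_a(\mathbf L)$.

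For join-irreducibility, suppose $a=x\vee y$ with $x,y<a$; then $x$ and $y$ are automatically incomparable (otherwise $a$ would coincide with the larger of them). The pairs $(x,y)$, $(y,x)$ and $(a,a)$ all belong to $P_a(\mathbf L)$, using $x\wedge y\leq a=x\vee y$ for the first two. Incomparability of $(x,y)$ and $(y,x)$ follows directly from that of $x$ and $y$, and any $\sqsubseteq$-relation between $(a,a)$ and either $(x,y)$ or $(y,x)$ would force $a\leq x$ or $a\leq y$, which contradicts $x,y<a$. So again we obtain a three-element antichain.

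For meet-irreducibility I would invoke Lemma~\ref{Lemma1}: the involution $'$ furnishes an isomorphism between $\big(P_a(\mathbf L),\sqsubseteq\big)$ and $\big(P_a(\mathbf L^d),\sqsubseteq_d\big)$, so the no-three-element-antichain hypothesis transfers to $\mathbf L^d$, and meet-irreducibility of $a$ in $\mathbf L$ is exactly join-irreducibility of $a$ in $\mathbf L^d$, already handled. I do not anticipate a genuine obstacle here; the only point requiring care is bookkeeping of the reversed order on the second coordinate, and the observation that it is the ``neutral'' pair $(a,a)$ that blocks comparability between each swapped pair $(x,y)$ and its partner.
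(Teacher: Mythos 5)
Your proof is correct and follows essentially the same route as the paper: for each failed property one exhibits an explicit three-element antichain containing $(a,a)$ together with a swapped pair, namely $\{(a,a),(a,u),(u,a)\}$ for incomparability and $\{(a,a),(x,y),(y,x)\}$ with $x\vee y=a$ for join-irreducibility. The only cosmetic difference is in the meet-irreducibility case, where the paper directly writes down the symmetric antichain $\{(a,a),(e,f),(f,e)\}$ with $e\wedge f=a$, while you instead transfer the statement through the duality of Lemma~\ref{Lemma1}; both work equally well.
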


\begin{proof}
If there would exist some element $b$ of $L$ with $b\parallel a$ then $\{(a,a),(a,b),(b,a)\}$ 
would be a three-element antichain of $\big(P_a(\mathbf L),\sqsubseteq)$. If $a$ would 
not be join-irreducible then there would exist $c,d\in L\setminus\{a\}$ with $c\vee d=a$ 
and $\{(a,a),(c,d),(d,c)\}$ would be a three-element antichain of $\big(P_a(\mathbf L),\sqsubseteq)$. 
If $a$ would not be meet-irreducible then there would exist $e,f\in L\setminus\{a\}$ with 
$e\wedge f=a$ and $\{(a,a),(e,f),(f,e)\}$ would be a three-element antichain of $\big(P_a(\mathbf L),\sqsubseteq)$.
\end{proof}

\begin{theorem}
	The Kleene lattice $\mathbf K$ depicted in Figure~5
	is not representable.
\end{theorem}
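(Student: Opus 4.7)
The plan is proof by contradiction. Suppose $\mathbf K \cong \mathbf P_S(\mathbf L)$ for some distributive lattice $\mathbf L=(L,\vee,\wedge)$ and non-empty $S\subseteq L$. Since $\mathbf K$ is finite, so is $\mathbf L$; in particular $\bigwedge S$ and $\bigvee S$ exist, and by Lemma~\ref{lem2} we may assume $S=\{a\}$ or $S=\{a,b\}$ with $a<b$. Parity now narrows the case down: by Lemma~\ref{lem4}(i) $|P_a(\mathbf L)|$ is odd, while by Lemma~\ref{lem3}(i) $|P_{ab}(\mathbf L)|$ is even, so reading $|K|$ off Figure~5 picks exactly one of the two cases.

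Next I would exploit antichain information. Inspection of Figure~5 shows that $\mathbf K$ has no three-element antichain. Since the corollaries to Lemma~\ref{embedd} embed $\mathbf L$ into $\mathbf P_S(\mathbf L)$, $\mathbf L$ itself has no three-element antichain. In the singleton case $S=\{a\}$, Lemma~\ref{Lemma3c} then forces $a$ to be comparable with every element of $L$ and both join- and meet-irreducible, so $\mathbf L$ splits as an ordinal sum $\mathbf L_1 +_a \mathbf L_2$ with $L_1=(a]$ and $L_2=[a)$. In the doubleton case, applying the same antichain argument to both $a$ and $b$ (using the three-element antichains one would obtain from any incomparable element, or from any non-trivial join/meet decomposition of $a$ or $b$) forces $\mathbf L = \mathbf L_1 +_a \mathbf B +_b \mathbf L_2$, where $\mathbf B=[a,b]$ must be Boolean (as noted just after Lemma~\ref{embedd}).

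Having pinned down the shape of $\mathbf L$, Corollary~\ref{lem5} (resp.\ Corollary~\ref{lem5a} or Lemma~\ref{lemmalbl}) gives the explicit representation
\[
\mathbf P_a(\mathbf L) \cong (\mathbf L_1\times\mathbf L_2^d)+_{(a,a)}(\mathbf L_2\times\mathbf L_1^d),
\]
and analogously for the doubleton case, with a Boolean block $\mathbf B$ inserted at the waist. The final step is to read off Figure~5 some structural invariant that obstructs this forced shape: the candidates I would check are (a) the number of upper and lower covers of the central fixed point of $'$ in $\mathbf K$, which in the representation must equal $|L_2|$ (resp.\ $|L_1|$) at the extremal corner of the product; (b) the requirement that the lower half of $\mathbf K$ be a direct product of the form $\mathbf L_1\times\mathbf L_2^d$, tested for instance by whether that half is a direct product at all; and (c) in the even case, whether the middle section of $\mathbf K$ is a Boolean algebra.

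The main obstacle is this last comparison, because it depends on the specific drawing: the contradiction must come from some rigid feature of the ordinal-sum/product form of $\mathbf P_S(\mathbf L)$ that $\mathbf K$ in Figure~5 does not match. The cardinality bounds from Lemma~\ref{lem4}(iii)--(iv) and the explicit counts in the preceding section give quantitative tools to rule out the finitely many candidate pairs $(\mathbf L_1,\mathbf L_2)$ (and $\mathbf B$) compatible with $|K|$, completing the contradiction and proving $\mathbf K$ non-representable.
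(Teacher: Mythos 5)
Your setup reproduces the paper's strategy almost exactly: reduce $S$ via Lemma~\ref{lem2}, use the parity of $|K|=9$ together with Lemma~\ref{lem3}(i) to force $S=\{a\}$, and then use the absence of three-element antichains in $\mathbf K$ together with Lemma~\ref{Lemma3c} to force $a$ to be comparable with everything and join- and meet-irreducible. Up to that point the argument is sound (the doubleton discussion is moot once parity has eliminated that case). The problem is that the proof stops exactly where the actual contradiction has to be produced: you list candidate invariants one ``would check'' and assert that the quantitative tools ``complete the contradiction,'' but no candidate lattice $\mathbf L$ is ever enumerated and no mismatch with Figure~5 is ever exhibited. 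As written, this is a plan for a proof, not a proof; the statement is a non-existence claim over all pairs $(\mathbf L,S)$, so the finite case analysis is not optional bookkeeping but the core of the argument.

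The missing step is short and you already have all the tools for it. Since $a$ is comparable with every element and join- and meet-irreducible, the cardinality lemma for $L=[0,a]\cup[a,1]$ gives $9=|P_a(\mathbf L)|=2\,|[0,a]|\cdot|[a,1]|-1$, hence $|[0,a]|\cdot|[a,1]|=5$, so (up to the duality of Lemma~\ref{Lemma1}) $a$ is the bottom of $\mathbf L$ and $|L|=5$. The five-element distributive lattices are the chain, $\mathbf C_2+\mathbf B_4$ and $\mathbf B_4+\mathbf C_2$ (with $\mathbf B_4$ the four-element Boolean algebra); meet-irreducibility of the bottom element excludes $\mathbf B_4+\mathbf C_2$, the chain gives $P_a(\mathbf L)\cong\mathbf C_9\not\cong\mathbf K$ by Lemma~\ref{lem1}, and $\mathbf C_2+\mathbf B_4$ gives the lattice of Figure~7 (equivalently $\mathbf L^d+_{(a,a)}\mathbf L$ by Corollary~\ref{lem5}), which is not $\mathbf K$ since it has two atoms while $\mathbf K$ has one. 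This is exactly the check the paper performs; without it, or something equivalent, your proposal does not establish the theorem.
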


\begin{proof}
	Assume $\mathbf K$ to be representable. Then there exists some finite distributive lattice $\mathbf L=(L,\vee,\wedge)$ and some non-empty subset $S$ of $L$ with $\big(P_S(\mathbf L),\sqcup,\sqcap\big)\cong\mathbf K$. Because of Lemma~\ref{lem2}, without loss of generality we can assume $S=\{a\}$ or $S=\{a,b\}$ with $a<b$. Because of (i) of Lemma~\ref{lem3} we have $S=\{a\}$ with some $a\in L$. From (iii) of Lemma~\ref{lem4} we conclude $|L|\leq(|P_a(\mathbf L)|+1)/2=(|K|+1)/2=(9+1)/2=5$. From 
	 Corollary~\ref{lemmacbc} and Lemma~\ref{Lemma3c} it is easy to see that $\mathbf L$ can neither 
	be a chain nor the four-element Boolean algebra.  
	
	If $\mathbf L$ is of the form visualized in Figure~6 and $a$ is the smallest element of $\mathbf L$ then $\big(P_a(\mathbf L),\sqcup,\sqcap\big)$ has the form depicted in Figure~7. If $a$ is another element of $\mathbf L$ then $|P_a(\mathbf L)|\neq9$.

	If $\mathbf L$ is the dual of the lattice visualized in Figure~6 the same 
	reasons apply by Lemma \ref{Lemma1}. 
\end{proof}

\vspace*{4mm}

\begin{center}
\setlength{\unitlength}{7mm}

\begin{tabular}{c c c c c}
\begin{picture}(2,6)
\put(1,0){\circle*{.3}}
\put(1,1){\circle*{.3}}
\put(0,2){\circle*{.3}}
\put(2,2){\circle*{.3}}
\put(1,3){\circle*{.3}}
\put(0,4){\circle*{.3}}
\put(2,4){\circle*{.3}}
\put(1,5){\circle*{.3}}
\put(1,6){\circle*{.3}}
\put(1,1){\line(-1,1)1}
\put(1,1){\line(0,-1)1}
\put(1,1){\line(1,1)1}
\put(0,2){\line(1,1)2}
\put(2,2){\line(-1,1)2}
\put(1,5){\line(-1,-1)1}
\put(1,5){\line(0,1)1}
\put(1,5){\line(1,-1)1}
\put(.2,-1.4){{\rm Fig.~5}}
\end{picture}&\phantom{xxxxxx}&
\begin{picture}(2,3)
\put(1,0){\circle*{.3}}
\put(1,1){\circle*{.3}}
\put(0,2){\circle*{.3}}
\put(2,2){\circle*{.3}}
\put(1,3){\circle*{.3}}
\put(1,1){\line(-1,1)1}
\put(1,1){\line(1,1)1}
\put(1,1){\line(0,-1)1}
\put(1,3){\line(-1,-1)1}
\put(1,3){\line(1,-1)1}
\put(.2,-1.4){{\rm Fig.~6}}
\end{picture}&\phantom{xxxxxx}&
\begin{picture}(2,6)
\put(1,0){\circle*{.3}}
\put(0,1){\circle*{.3}}
\put(2,1){\circle*{.3}}
\put(1,2){\circle*{.3}}
\put(1,3){\circle*{.3}}
\put(1,4){\circle*{.3}}
\put(0,5){\circle*{.3}}
\put(2,5){\circle*{.3}}
\put(1,6){\circle*{.3}}
\put(1,0){\line(-1,1)1}
\put(1,0){\line(1,1)1}
\put(1,2){\line(-1,-1)1}
\put(1,2){\line(0,1)2}
\put(1,2){\line(1,-1)1}
\put(1,4){\line(-1,1)1}
\put(1,4){\line(1,1)1}
\put(1,6){\line(-1,-1)1}
\put(1,6){\line(1,-1)1}
\put(.2,-1.4){{\rm Fig.~7}}
\end{picture}
\end{tabular}
\end{center}

\vspace*{10mm}

\section*{Acknowledgment}

Support of the research of all authors by 
the Austrian Science Fund (FWF), project I~4579-N, and the Czech Science Foundation (GA\v CR), project 20-09869L, entitled ``The many facets of orthomodularity'' is gratefully acknowledged.  The first two authors gratefully acknowledge the support by \"OAD, project CZ~02/2019, entitled ``Function algebras and ordered structures related to logic and data fusion'', and, the first author  gratefully acknowledges the support by IGA, project P\v rF~2021~030.

\end{document}